\newtheorem{theorem}{Theorem}
\newtheorem{proposition}{Proposition}
\newtheorem{corollary}{Corollary}
\newtheorem{lemma}{Lemma}
\newtheorem{claim}{Claim}[proposition]
\theoremstyle{definition}
\newtheorem{remark}{Remark}
\def\bE{\mathbb{E}}
\def\bN{\mathbb{N}}
\def\bS{\mathbb{S}}
\def\bR{\mathbb{R}}
\def\bZ{\mathbb{Z}}
\def\bp{\bm{p}}
\DeclareMathOperator{\supp}{supp}
\begin{document}
\begin{frontmatter}

\title{Random iterations of homeomorphisms\\ on the circle}

\author[a]{\inits{K.}\fnm{Katrin}\snm{Gelfert}\fnref{f1}}\email{gelfert@im.ufrj.br}
\author[b]{\inits{\"O.}\fnm{\"Orjan}\snm{Stenflo}\corref{cor1}\fnref{f1}}\email{stenflo@math.uu.se}
\cortext[cor1]{Corresponding author.}
\address[a]{Institute of Mathematics, Federal University of Rio de
Janeiro, 22.453 Rio de Janeiro RJ, Brazil}
\address[b]{Department of Mathematics, Uppsala University, Box 480,
75106 Uppsala, Sweden}

\fntext[f1]{KG has been supported, in part, by CNPq research grant
302880/2015-1 (Brazil). KG and \"OS thank ICERM (USA) for their
hospitality and financial support.}

\dedicated{Dedicated to Professor Dmitrii S.\ Silvestrov on the occasion
of his 70th Birthday}

\markboth{K. Gelfert, \"O. Stenflo}{Random iterations of homeomorphisms
on the circle}

\begin{abstract}
We study random independent and identically distributed iterations of
functions from an iterated function system of homeomorphisms on the
circle which is minimal. We show how such systems can be analyzed in
terms of iterated function systems with probabilities which are
non-expansive on average.
\end{abstract}
\begin{keywords}
\kwd{Markov chains}
\kwd{stationary distributions}
\kwd{minimal}
\kwd{iterated function systems}
\kwd{circle homeomorphisms}
\kwd{synchronization}
\kwd{random dynamical systems}
\end{keywords}
\begin{keywords}[2010]
\kwd{37E10}
\kwd{37Hxx}
\kwd{60B10}
\kwd{60J05}
\kwd{60G57}
\end{keywords}

\received{22 March 2017}
\revised{25 September 2017}
\accepted{25 September 2017}
\publishedonline{5 October 2017}
\end{frontmatter}


\section{Introduction}

We study iterations of a finite family of circle homeomorphisms.
This topic has been studied already from a number of different points
of view.
One may, for example, take a purely deterministic approach and study
the associated action of the \emph{group of circle homeomorphisms} (the
special case of the group of orientation preserving circle
diffeomorphisms is treated
in~\cite{Ghy:01,Nav:11,GGKV14}).
Or one may, as we will, take a probabilistic approach and investigate
Markov chains generated by random independent and identically
distributed (i.i.d.) iterations of functions from the family (such as
in~\cite{LeJan87,Cra:90,SZ16}).

We restrict our attention to families of functions which are forward minimal
in the sense that for any two points on the circle, there are orbits
from the first point arbitrary close to the second one
using some concatenations of functions from the family.
The set of distances which are preserved simultaneously
by all maps allows us to distinguish between distinct types of ergodic
behavior for such Markov chains.

By finding a topologically conjugate system which is non-expansive on average,
under the additional assumption that the system of inverse maps is
forward minimal, we prove limit theorems
including almost sure synchronization of random trajectories (which is
sometimes also referred to as Antonov's theorem~\cite{Ant:84}) provided
that the system is not topologically conjugate to a family containing
only isometries, and uniqueness and fiberwise properties of stationary
distributions.

In contrast to many previous authors we do not assume that all maps
preserve orientation or, \emph{a priori},
that the system of inverse maps is forward minimal (such as in~\cite
{Ant:84,Ghy:01,KN04,Nav:11,GGKV14,SZ16}) or contains at least one map
which is minimal (as in~\cite{SZ16}). Our setting is also studied
in~\cite{M17} (without any minimality condition), where a different
approach is
used and ideas of~\cite{AviVia:10} are adapted which in turn are built
on ideas of~\cite{Led:86,Cra:90}. See also \cite{SZ17}. One further
precursor in a more specific setting is the work by Furstenberg~\cite
{Fur:63} where the homeomorphisms are the projective actions of
elements of $SL_2(\bR)$.

\section{Random iterations}

Let $K$ be a compact topological space equipped with its Borel sets. We
call a finite set $F=\{f_1,\ldots,f_N\}$ of continuous
functions $f_j\colon K\to K$, $j=1,\ldots,N$, an \emph{iterated
function system (IFS)}.
If all maps $f_j$ are homeomorphisms, as we will in general assume
here, then we also consider the associate
IFS $F^{-1}:=\{f_1^{-1},\ldots,f_N^{-1}\}$ of the inverse maps.

We will discuss different points of view on random and deterministic
iterations of functions from an IFS and recall some standard notations
and facts.

Given $(I_n )_{n \geq1}$ a stochastic sequence with values in $\{
1,\ldots,N\}$, for $x \in K$ define
\[
Z_n^x := (f_{I_n} \circ\cdots\circ
f_{I_1}) (x), \qquad Z_0^x=x.
\]
We may consider without loss of generality 
the (\emph{a priori}) unspecified common domain of the random variables
$I_n$ as $\varSigma=\{1,\ldots,N\}^\mathbb{N}$, equipped with a
probability measure $P$ defined
on its Borel subsets,
with $I_n$ being defined as $I_n(\omega)=\omega_n$
for
every $\omega= (\omega_1\omega_2\ldots)\in\varSigma$ and $n \geq1$.

We will later also consider the shift map $\sigma\colon\varSigma\to\varSigma$
defined by $\sigma(\omega_1\omega_2\ldots):=(\omega_2\omega_3\ldots)$.

For any $\omega=(\omega_1 \omega_2 \ldots) \in\varSigma$, any $n \geq0$
and any $x\in K$ we thus define $Z_n^x(\omega)=Z_n(x,\omega)$, where
\begin{equation}
\label{def:Zn} Z_n(x,\omega) := (f_{\omega_n} \circ\cdots\circ
f_{\omega_1}) (x), \qquad Z_0(x,\omega)=x.
\end{equation}

The sequence $(Z_n(x,\omega))_{n \geq0}$ is called the \emph
{trajectory} corresponding to the \emph{realization} $\omega$ of the
random process $(Z_n^x)_{n \geq0}$ starting at $x \in K$.
It is common to also consider iterates in the reversed order and to define
\begin{equation}
\label{def:hatZn} \widehat{Z}_n(x,\omega) :=(f_{\omega_1} \circ\cdots
\circ f_{\omega_n}) (x), \qquad\widehat{Z}_0(x,\omega)=x.
\end{equation}
If $F$ is an IFS of homeomorphisms, then we also consider the associate sequence
$(Z_n^-(x,\omega))_{n\ge0}$ defined by
\[
Z_n^-(x,\omega) :=\bigl(f_{\omega_n}^{-1} \circ\cdots
\circ f_{\omega_1}^{-1}\bigr) (x),\qquad Z^-_0(x,\omega)=
x,
\]
and the sequence $(\widehat Z_n^-(x,\omega))_{n\ge1}$ defined by
\begin{equation}
\label{def:hatZn-} \widehat{Z}^{-}_n(x,\omega) :=
\bigl(f_{\omega_1}^{-1} \circ\cdots\circ f_{\omega_n}^{-1}
\bigr) (x),\qquad \widehat{Z}^{-}_0(x,\omega)= x.
\end{equation}
Note that for every $\omega\in\varSigma$ and $x\in K$ it holds
\[
\widehat{Z}^{-}_n(x,\omega) 
=(f_{\omega_n}
\circ\cdots\circ f_{\omega_1})^{-1}(x) 
\quad\text{and}
\quad 
\widehat{Z}_n(x,\omega) 
=
\bigl(f_{\omega_n}^{-1} \circ\cdots\circ f_{\omega_1}^{-1}
\bigr)^{-1}(x).
\]


\subsection{Iterated function systems with probabilities and Markov chains}

Let $(I_n )_{n \geq1}$ be i.i.d. variables.
The probability measure $P$ is then a Bernoulli measure determined by
a probability vector $\bp=(p_1,\ldots,p_N)$. It then follows that
$Z_n^x=Z_n(x,\cdot)$
defined in~\eqref{def:Zn} and $\widehat{Z}_n^x=\widehat{Z}_n(x,\cdot)$
defined in~\eqref{def:hatZn} both have the same distribution for any
fixed $n \geq1$,
and $(Z_n^x)_{n \geq0}$ is a (time-homogeneous) Markov chain with
transfer operator
$T$ defined for bounded measurable functions $h\colon K \to\mathbb{R}$ by
\begin{equation}
\label{def:T} T h(x) := \sum_{j=1}^N
p_j h\bigl(f_j(x)\bigr).
\end{equation}

If $\bp$ is \emph{non-degenerate}, that is, if 
$p_j>0$ for every $j=1,\ldots,N$, then we call the pair $(F,\bp)$ an
\emph{IFS with probabilities}. The Markov chain $(Z_n^x)_{n \geq0}$ is
obtained by independent random iterations where in each iteration step
the functions $f_j$ are chosen with probability $p_j$.

Markov chains generated by IFSs with probabilities is a particular
class of Markov chains that has
received a considerable attention in recent years. The IFS terminology
was coined by Barnsley and Demko~\cite{BD85}.\footnote{A common abuse
of notation is to use the term ``IFS'' for the Markov chain $(Z_n^x)_{n
\geq0}$ obtained from an IFS with probabilities. We here stress the
deterministic nature of an IFS and the fact that an IFS can be used to
build other objects like e.g.\
$(\widehat{Z}_n(x,\omega))_{n \geq0}$. A common way to construct
fractal sets is for example to regard them
as sets of limit points for the latter sequence (assuming conditions
such as, for example, contractivity ensuring the limit to exist).}

A Borel probability measure $\mu$ on $K$ is an \emph{invariant
probability measure} for the IFS with probabilities $(F,\bp)$ if
\[
T_\ast\mu=\mu, \quad\text{where}\quad T_{\ast} \mu(\cdot) =
\sum_{j} p_j \mu\bigl(f_j^{-1}(
\cdot)\bigr).
\]
Such a measure $\mu$ is also called a \emph{stationary distribution}
for the corresponding Markov chain, since if $X$ is a $\mu$-distributed
random variable, independent of $(I_n)_{n \geq0}$ then $(Z_n^X)_{n
\geq0}$ will be a stationary stochastic sequence.

\begin{remark}\label{rem:existamea}
By continuity of all functions $f_j$, $j=1,\ldots,N$, it follows that
$(Z_n^x)_{n \geq0}$ has the \emph{weak Feller property}, that is, $T$
maps the space of real valued continuous functions on $K$ to itself.
It is well known that Markov chains with the weak Feller property have
at least one stationary distribution, see for example \cite{MT93}.
Hence, any IFS with probabilities $(F,\bp)$ has at least one invariant
probability measure.
\end{remark}

\begin{remark}
Another formalism (which will not be used here) for analyzing
stochastic sequences related to an IFS with probabilities is the one of
a (deterministic) step skew product map $(\omega,x)\mapsto(\sigma(\omega
),f_{\omega_1}(x))$ with the shift map $\sigma\colon\varSigma\to\varSigma$ in
the base and locally constant fiber maps. The Bernoulli measure is a
$\sigma$-invariant measure in the base. Invariant measures (and hence
stationary distributions) are closely related to measures which are
invariant for the step skew product (see, for example,~\cite[Chapter
5]{Via:14}).
\end{remark}

Given a positive integer $n$, define by $T^n=T\circ\cdots\circ T$ and
$T_\ast^n=T_\ast\circ\cdots\circ T_\ast$ (each $n$ times) the
concatenations of $T$ and $T_\ast$, respectively.
We call a stationary distribution $\mu$ for $(Z_n^x)_{{n\ge0}}$ \emph
{attractive} if for any $x \in K$ we have $T_\ast^n \delta_x \to\mu$
as $n \to\infty$ in the weak$\ast$ topology, where $\delta_x$ denotes
the Dirac measure concentrated in $x \in K$. In other words, for any
continuous $h \colon K\to\bR$ and for any $x \in K$ we have
\begin{eqnarray}
\label{aconv} \lim_{n\to\infty} T^n h(x) = \int h \,d
\mu.
\end{eqnarray}
An attractive stationary distribution is uniquely stationary.

Let $\rho$ be some metric on $K$.
We say that an IFS with probabilities $(F,\bp)$ is \emph{contractive on
average} with respect to $\rho$ if for any $x,y \in\mathbb{S}^1$ we have
\begin{equation}
\label{0214} \sum_{j=1}^{N}
p_j \rho\bigl(f_j(x),f_j(y)\bigr) \leq c
\rho(x,y),
\end{equation}
for some constant $c <1$ and
\emph{non-expansive on average} if (\ref{0214}) holds for some
constant $c \leq1$.

\begin{remark}\label{rem:unistamea}
It is well known that a Markov chain $(Z_n^x)_{{n\ge0}}$ generated by
an IFS with probabilities $(F,\bp)$ which is contractive on average
has an attractive (and hence unique) stationary distribution.
More generally the distribution of $Z_n^x$ then converges (in the
weak$\ast$ topology) to the stationary distribution with an exponential
rate that can be quantified for example by the Wasserstein metric,
see e.g. \cite{S12}.

Far less is known for non-expansive systems. The theory for Markov
chains generated by non-expansive systems can be regarded as belonging
to the realm of Markov chains where $\{T^n h\}$ is equicontinuous for
any continuous $h \colon K \to \mathbb{R}$, or ``stochastically
stable'' Markov chains (see \cite{MT93} for a survey).
\end{remark}

The Markov chain $(Z_n^x)_{n \geq0}$ is \emph{topologically recurrent}
if for any open set $O\subset K$ and any $x \in K$ we have
\[
P \bigl(Z_n^x \in O \text{ for some } n \bigr) > 0.
\]

In the present paper we are going to study a special class of
topologically recurrent Feller continuous Markov chains generated by
IFSs with probabilities of homeomorphisms on the circle. The topology
of the circle and the hence implied monotonicity of the maps play a
crucial role for our results.

\section{IFSs with homeomorphisms on the circle}

From now on we will always assume $K=\bS^1= \bR\slash\bZ$ to be the
unit circle and consider an IFS $F=\{f_j\}_{j=1}^N$ of homeomorphisms
$f_j\colon\bS^1\to\bS^1$. Let $d(x,y):=\min\{ |y-x|, 1-|y-x| \}$ be the
standard metric on $\bS^1$.

\subsection{Deterministic iterations and simultaneously preserved distances}

An IFS $F=\{f_j\}_{j=1}^N$ is \emph{forward minimal} if for any open
set $O \subset K$ and any $x \in K$ there exist some $n \geq0$ and
some $\omega\in\varSigma$ such that
\[
Z_n(x,\omega) \in O.
\]
In other words, for a forward minimal IFS it is possible to go from any
point $x$ arbitrarily close to any point $y$ by applying some
concatenations of functions in the IFS.
We say that the IFS $F=\{f_j\}_{j=1}^N$ of homeomorphisms $f_j$ is \emph
{backward minimal} if the IFS $\{f_j^{-1}\}_{j=1}^N$ is forward minimal.

\begin{remark}
Note that $F$ is forward (backward) minimal if and only if for every
nonempty closed set $A\subset\bS^1$ satisfying $f_j(A)\subset A$
($f_j^{-1}(A)\subset A$) for every $j$ we have $A=\bS^1$.

Note that not every forward minimal IFS is automatically backward
minimal if $N>1$ (see~\cite{BGMS17} for a discussion and counterexamples).
By \cite[Corollary E]{BGMS17}, an IFS is both forward and backward
minimal if and only if there exists an $\omega\in\varOmega$ such that
$(Z_n(x,\omega))_{n \geq0}$ is dense, for any $x \in\mathbb{S}^1$.
(By forward minimality this property trivially holds for some fixed $x
\in\mathbb{S}^1$, but the choice of $\omega$ might depend on $x \in
\mathbb{S}^1$.)
A simple sufficient condition for an IFS of circle homeomorphisms to be
both forward and backward minimal is that at least one of the maps has
a dense orbit. A class of IFSs which are forward and backward minimal
(so-called expanding-contracting blenders) but without a map with a
dense orbit can be found in \cite[Section~8.1]{DGR17}.
\end{remark}

The following is somehow related to the study of the well-known concept
of \emph{rotation numbers} of orientation-preserving circle
homeomorphisms which was introduced by Poincar\'e and which provides an
invariant to (almost completely) characterize topologically conjugacy.%
\footnote{The rotation number $r(f)$ of a circle homeomorfism $f$ is
rational if, and only if, $f$ has a periodic orbit. If $r(f)$ is
irrational then $f$ is semi-conjugate to a rotation by angle $r(f)$
and, in particular, this semi-conjugacy is a conjugacy if $f$ is minimal.}
Rotation numbers are also important when studying an IFS (which can be
considered as a special group action) of orientation-preserving circle
homeomorphisms. The surveys~\cite{Ghy:01,Nav:11} review these facts,
see also~\cite{GGKV14}.

Here we deal with a more general class of IFSs in which not necessarily
all maps preserve orientation.

Given $F$ and a metric $\rho$ on $\bS^1$, let $L=L(F,\rho)$ defined by
\begin{equation}
\label{def:L} %
\begin{split} L := \bigl\{ s \in[0,1/2]\colon&
\rho(x,y)=s \ \text{implies that}\ \rho\bigl(f_j(x),f_j(y)
\bigr)=s
\\
& \text{ for any } j=1,\ldots, N \text{ and } (x,y) \in\bS^1\times
\bS^1\bigr\} \end{split} %
\end{equation}
be the set of $\rho$-distances which simultaneously are preserved by
all maps in $F$.

\begin{remark}
Note that since all maps of the IFS are homeomorphisms it follows that
for every $x,y \in\mathbb{S}^1$ with $\rho(x,y) \in L(F,\rho)$ we have
\[
\rho(x,y)=\rho\bigl(f_j(x),f_j(y)\bigr)= \rho
\bigl(f_j^{-1}(x),f_j^{-1}(y)\bigr)
\quad\text{for all}\ j=1,\ldots,N,
\]
and thus $L(F,\rho)=L(F^{-1},\rho)$. Moreover, note that by continuity
of the maps of the IFS, the set $L$ is closed.
\end{remark}

We have the following dichotomy.

\begin{lemma}\label{lem:L}
If $L=L(F,\rho)$ is finite, then
\[
L= \biggl\{0,\frac{1}{k},\frac{2}{k},\ldots,\frac{\lfloor{k/2} \rfloor
}{k} \biggr
\},
\]
for some $k \geq1$.

If $L=L(F,\rho)$ is infinite, then $L=[0,1/2]$. All IFS maps are then
isometries (with respect to $\rho$).
\end{lemma}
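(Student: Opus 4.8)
The plan is to exploit the algebraic structure forced on $L$ by the fact that it is built from an equivalence-like relation on pairs of points. First I would record the key closure property: if $s,t \in L$ and $x,y,z \in \bS^1$ with $\rho(x,y) = s$ and $\rho(y,z) = t$ arranged so that $\rho(x,z) = s+t$ (when $s+t \le 1/2$) or $\rho(x,z) = 1 - (s+t)$ (when $s+t > 1/2$), then since every $f_j$ preserves both distances $s$ and $t$ and is an orientation-respecting-or-reversing homeomorphism, it preserves the cyclic order of the triple up to global reversal, hence preserves $\rho(x,z)$ as well. This shows $L$ is closed under the operation $s,t \mapsto \min\{s+t, 1-(s+t)\}$ (i.e.\ addition modulo $1$ folded into $[0,1/2]$), and that $0 \in L$ trivially. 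The one subtlety to check carefully is that for every $s, t \in [0,1/2]$ such a configuration of three points actually exists on the circle — it does, since $s + t \le 1$ always — and that the homeomorphism property genuinely transports the "betweenness" of $y$ in the shorter arc from $x$ to $z$.

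Given this, $L$ generates an additive subsemigroup (in fact subgroup, since $s \in L$ forces $1 - s \equiv -s \in L$ after folding, and iterating gives $\mathbb{Z}$-combinations) of $\bR/\bZ$ under the folding identification. So $\widetilde{L} := L \cup \{1 - s : s \in L\} \subset \bR/\bZ$ is a subgroup of the circle group. Now I invoke the standard classification of subgroups of $\bR/\bZ$: such a subgroup is either finite cyclic, hence of the form $\{0, 1/k, 2/k, \ldots, (k-1)/k\}$ for some $k \ge 1$, or dense. Folding back into $[0,1/2]$: in the finite case $L = \{0, 1/k, 2/k, \ldots, \lfloor k/2 \rfloor / k\}$, which is the first alternative. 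In the dense case, I use that $L$ is closed (noted in the preceding remark, by continuity of the $f_j$) to conclude $L = [0,1/2]$.

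It remains to show that in the infinite case every $f_j$ is a $\rho$-isometry. Since $L = [0,1/2]$, for \emph{every} pair $x, y$ we have $\rho(f_j(x), f_j(y)) = \rho(x, y)$ by the very definition of $L$ in \eqref{def:L}; that is literally the statement that $f_j$ preserves all $\rho$-distances, i.e.\ is an isometry. So this part is immediate once $L = [0,1/2]$ is established. The main obstacle I anticipate is the first step — verifying rigorously that $L$ is closed under the folded addition, since one must handle the orientation-reversing maps uniformly and be careful with the two regimes $s + t \le 1/2$ and $s + t > 1/2$ (and the boundary case $s + t = 1/2$, where the ``shorter arc'' is ambiguous but the conclusion $\rho(x,z) = 1/2 \in L$ still follows). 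Everything after that is the routine classification of circle subgroups plus the closedness of $L$.
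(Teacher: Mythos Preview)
Your approach matches the paper's: establish that $L$ is closed under the folded addition $s_1 \oplus s_2 := \min\{s_1+s_2,\,1-s_1-s_2\}$ via the ``homeomorphisms preserve the configuration of the triple $x,y,z$'' argument, then read off the structure of $L$. Your packaging of the second step through the classification of closed subgroups of $\bR/\bZ$ is a mild reformulation of the paper's direct claim that the least positive element must be $1/k$; the treatment of the infinite case (dense plus closed gives $[0,1/2]$, hence all maps are isometries) is identical.

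One point needs tightening. Your assertion that $\widetilde L := L \cup \{1-s:s\in L\}$ is a \emph{subgroup} of $\bR/\bZ$ requires closure under addition in the mixed case $s_1 + (1-s_2) \equiv s_1 - s_2$, i.e.\ that $|s_1 - s_2| \in L$ whenever $s_1,s_2 \in L$; closure under $\oplus$ alone does not deliver this, and your parenthetical (``$1-s \equiv -s \in L$ after folding, and iterating gives $\bZ$-combinations'') is vacuous since folding $1-s$ back into $[0,1/2]$ just returns $s$. The fix is that the very same configuration argument you outline also yields difference-closure. More cleanly, one first checks that $s \in L$ forces each $f_j$ to map every arc of $\rho$-length $s$ to an arc of $\rho$-length $s$ (not $1-s$) --- a short continuity/tiling argument, since two adjacent image arcs cannot both have length $1-s>1/2$ --- and then both $s_1 \oplus s_2 \in L$ and $|s_1-s_2| \in L$ follow immediately by concatenating or nesting such arcs. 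With that in hand your subgroup route goes through. The paper is equally terse at the corresponding passage (``hence $L$ must have the given form''), so this is a shared soft spot rather than a divergence in strategy.
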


\begin{proof}
Consider the operation $\oplus\colon L\times L\to\mathbb{S}^1$ defined by
\[
s_1 \oplus s_2 := \min\{ s_1+s_2,1-s_1-s_2
\}.
\]
Note that $L$ is closed under this operation, that is, $\oplus:(L\times
L) \to L$.
Indeed, given $s_1,s_2\in L$, if $x,z\in\mathbb{S}^1$ are such that
$\rho(x,z)=s_1\oplus s_2$, then there is a point $y \in\mathbb{S}^1$
such that
$\rho(x,y)=s_1 $ and $\rho(y,z)=s_2$.
Thus, we have $\rho(f_j(x),f_j(y))=s_1$ and $\rho(f_j(y),f_j(z))=s_2$
for every $j=1,\ldots,N$. Since all maps $f_j$ are homeomorphisms, it
follows that $\rho(x,z)=\rho(f_j(x),f_j(z))$ for all $j=1,\ldots,N$ and
hence $s_1\oplus s_2\in L$.

It follows that if $L$ is finite (and nontrivial) then the smallest
positive element of $L$ must be a rational number of the form $1/k$ for
some integer $k >1$ and hence $L$ must have the given form.

If $L$ is infinite, then $L=[0,1/2]$, since $L$ has then arbitrary
small positive elements and must therefore be a dense, and by
continuity of all maps in $F$, also a closed subset of $[0,1/2]$. All
IFS maps are then isometries.
\end{proof}

\begin{remark}\label{rem:00}
If $L(F,d)$ is finite and $1/k$ is its smallest positive element, then
the IFS $\widetilde F=\{\tilde f_j\}$ with maps $\tilde f_j(x)=k
(f_j(x/k)\ \text{mod } 1/k)$, $j=1, \ldots, N$, satisfies $L(\widetilde
F,d)=\{0\}$. Thus, we can describe the dynamical properties of an IFS
with the set of preserved distances $L(F,d)$ being finite in terms of
the dynamics of an IFS with no positive preserved distances. Observe
that each of the maps $f_j$ is semiconjugate with $\tilde f_j$ by means
of the map $\pi\colon\bS^1\to\bS^1$ defined by $\pi(x)=kx\mod1$, that
is, we have $\pi\circ f_j=\tilde f_j\circ\pi$.
\end{remark}

Intuitively we may in all cases regard the infimum of all positive
elements of
$L=L(F,d)$ as the ``common prime period'' of all maps, where the case
when $L$ is infinite corresponds to a degenerated case. As mentioned
above, for orientation-preserving homeomorphisms this number can be
compared with the rotation number functions in~\cite{Ghy:01,Nav:11,GGKV14}.

\subsection{Random iterations}

First, recall the following well-known fact about
forward
minimal IFSs with probabilities on $\mathbb{S}^1$ (compare also~\cite
[Lemma 2.3.14]{Nav:11}).
We say that a measure $\mu$ has \emph{full support} if the support of
$\mu$ is $\bS^1$.

\begin{lemma}\label{lem:exiinvmea}
Let $(F,\bp)$ be an IFS with probabilities of homeomorphisms on
$\mathbb{S}^1$ and $\mu_+$ be an invariant probability measure for
$(F,\bp)$. If $F$
is forward minimal
then $\mu_+$ is nonatomic and has full support.
\end{lemma}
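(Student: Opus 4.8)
The plan is to prove the two assertions — nonatomicity and full support — separately, both by classical arguments exploiting forward minimality of $F$ and the fact that each $f_j$ is a homeomorphism of $\bS^1$.

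First I would establish that $\mu_+$ has full support. Let $S=\supp\mu_+$, a nonempty closed subset of $\bS^1$. The key observation is that the stationarity relation $\mu_+=\sum_j p_j\, \mu_+\circ f_j^{-1}$ together with $p_j>0$ forces, for every $j$, that $f_j(S)\subseteq S$: indeed if some open set $O$ meets $f_j(S)$ but $\mu_+(O)=0$, then $\mu_+(f_j^{-1}(O))=0$ while $f_j^{-1}(O)$ is open and meets $S$, a contradiction; more carefully, one shows $\mu_+(f_j^{-1}(O))\le \mu_+(O)/p_j$, so $\mu_+(O)=0$ implies $\mu_+(f_j^{-1}(O))=0$, hence $f_j^{-1}(\bS^1\setminus S)\subseteq \bS^1\setminus S$, i.e. $f_j(S)\subseteq S$. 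Thus $S$ is a nonempty closed set invariant under every $f_j$; by the characterization of forward minimality recalled in the Remark after the definition (a nonempty closed $A$ with $f_j(A)\subseteq A$ for all $j$ must equal $\bS^1$), we conclude $S=\bS^1$, i.e. $\mu_+$ has full support.

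Next I would show $\mu_+$ is nonatomic. Suppose not, and let $m=\max_{x\in\bS^1}\mu_+(\{x\})>0$; since $\mu_+$ is a probability measure the set $A=\{x\in\bS^1:\mu_+(\{x\})=m\}$ is finite and nonempty. From stationarity, for any $x$, $\mu_+(\{x\})=\sum_j p_j\,\mu_+(\{f_j^{-1}(x)\})\le m$, with equality forcing $\mu_+(\{f_j^{-1}(x)\})=m$ for every $j$ (again using $p_j>0$ and that each $f_j^{-1}$ is injective, so the preimages are single points). Hence $f_j^{-1}(A)\subseteq A$ for every $j$, so $A$ is a nonempty finite — in particular closed — set with $f_j^{-1}(A)\subseteq A$ for all $j$; by the same minimality characterization (this time for $F^{-1}$, but note that $F^{-1}$ need not be forward minimal, so instead I use: $f_j^{-1}(A)\subseteq A$ and $A$ finite and $f_j^{-1}$ injective force $f_j^{-1}(A)=A$, hence $f_j(A)=A\subseteq A$), we get that $A$ is a nonempty closed set with $f_j(A)\subseteq A$ for all $j$, so forward minimality gives $A=\bS^1$, contradicting the finiteness of $A$. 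Therefore $\mu_+$ has no atoms.

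The step I expect to require the most care is the bookkeeping with preimages in the nonatomic argument: one must be sure that the inequality $\mu_+(\{x\})=\sum_j p_j\,\mu_+(\{f_j^{-1}(x)\})$ is used with the homeomorphism property (so $f_j^{-1}(\{x\})$ is exactly one point and no mass is lost or split), and that the passage from $f_j^{-1}(A)\subseteq A$ with $A$ finite to $f_j(A)\subseteq A$ is justified by injectivity. Once that is in place, both conclusions follow cleanly from the closed-invariant-set characterization of forward minimality stated in the excerpt.
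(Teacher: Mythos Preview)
Your proposal is correct and follows essentially the same approach as the paper's proof: both parts use the closed-invariant-set characterization of forward minimality, and for nonatomicity you (like the paper) consider the finite set of atoms of maximal mass, use injectivity to upgrade $f_j^{-1}(A)\subseteq A$ to $f_j(A)=A$, and derive a contradiction. The only cosmetic differences are the order of the two parts and your phrasing of the full-support step via $\mu_+(O)=0\Rightarrow \mu_+(f_j^{-1}(O))=0$, whereas the paper argues directly that $\mu_+(f_j^{-1}(A))=1$ forces $A\subset f_j^{-1}(A)$.
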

\begin{proof}
By contradiction, suppose that $\mu_+$ is atomic. Let $x\in\bS^1$ be a
point of maximal positive $\mu_+$-mass. By invariance of $\mu_+$, we obtain
\[
\mu_+\bigl(\{x\}\bigr) = \sum_{j=1}^Np_j
\mu_+\bigl(\bigl\{f_j^{-1}(x)\bigr\}\bigr)
\]
and hence, since we assume that $\bp$ is non-degenerate, we have $\mu
_+(\{f_j^{-1}(x)\})=\mu_+(\{x\})$ for every $j$.
Hence, we obtain that the (nonempty) set
\[
A :=\bigl\{y\in\bS^1\colon\mu_+\bigl(\{y\}\bigr)=\mu_+\bigl(\{x\}
\bigr)\bigr\}
\]
satisfies $f_j^{-1}(A)\subset A$ for every $j$. Since $\mu_+$ is
finite, $A$ is finite (and, in particular, closed). Hence, since every
$f_j^{-1}$ is bijective, we in fact have $f_j^{-1}(A)=A$ and $f_j(A)=A$
for every $j$.
Assuming that $F$ is either backward minimal or forward minimal, we
hence obtain $A=\bS^1$, which is a contradiction. Hence $\mu_+$ is nonatomic.

An analogous argument shows that $\mu_+$ has full support. Indeed,
let the (closed) set $A=\supp\mu_+$ denote the support of $\mu_+$.
By invariance of $\mu_+$, for every $j$ we have $\mu_+(f_j^{-1}(A))=\mu
_+(A)=1$ which implies
$A \subset f_j^{-1}(A)$, i.e.\ $f_j(A) \subset A$ for
every $j$, so if $(F,\bp)$ is forward minimal, then $\mu_{+}$ has full support.
\end{proof}

We say that a probability measure $\mu$ on $\mathbb{S}^1$ is \emph
{$s$-invariant} for $s\in[0,1]$ if $(R_s)_\ast\mu= \mu$, where $R_s(x)=
(x + s) \text{ mod } 1$. Analogously, we say that an $\mathbb
{S}^1$-valued random variable $X$ is \emph{$s$-invariant} if its
distribution is $s$-invariant, in which case $X$ and $R_s(X)$ have the
same distribution.

\begin{lemma}\label{lem:1kinv-b}
Let $(F,\bp)$ be an IFS with probabilities of homeomorphisms on $\bS
^1$ which is forward minimal. Then any invariant probability measure
for $(F,\bp)$ is $s$-invariant for any $s\in L(F,d)$.
\end{lemma}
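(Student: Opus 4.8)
The plan is to fix an invariant probability measure $\mu$ for $(F,\bp)$ and an element $s\in L=L(F,d)$, and show $(R_s)_\ast\mu=\mu$. First I would note that the statement is trivial for $s=0$, so assume $s>0$; by Lemma~\ref{lem:L} we may then assume $L$ is finite (the infinite case is subsumed since $L=[0,1/2]$ only adds more distances, but the argument below is uniform anyway) and $s=m/k$ for the generating period $1/k$. Since $L=L(F,d)$ means every $f_j$ preserves the distance $s$, for each $j$ and each $x\in\bS^1$ the two points at distance exactly $s$ from $x$, namely $R_s(x)$ and $R_{-s}(x)=R_{1-s}(x)$, must be mapped by $f_j$ to the two points at distance $s$ from $f_j(x)$. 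Using the fact that $f_j$ is an orientation-preserving or orientation-reversing homeomorphism, one gets the rigid commutation relation: either $f_j\circ R_s=R_s\circ f_j$ for all orientation-preserving $f_j$, and $f_j\circ R_s=R_{-s}\circ f_j$ for orientation-reversing ones. In either case $f_j\circ R_s = R_{\pm s}\circ f_j$, and since $R_{-s}=R_{k-m\over k}\cdot\,$ acts the same way on the relevant lattice, the key consequence is that conjugation by $f_j$ permutes the finite cyclic group $G=\langle R_{1/k}\rangle$ of rotations.

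The main step is then to run the argument of Lemma~\ref{lem:exiinvmea}. Consider the pushforward $\nu:=(R_s)_\ast\mu$. Using $f_j\circ R_s = R_{\varepsilon_j s}\circ f_j$ with $\varepsilon_j\in\{+1,-1\}$, one checks that $\nu$ is again invariant for $(F,\bp)$: indeed
\[
T_\ast\nu = \sum_j p_j\, (f_j)_\ast (R_s)_\ast\mu
= \sum_j p_j\, (R_{\varepsilon_j s})_\ast (f_j)_\ast\mu,
\]
which is not literally $\nu$ unless all $\varepsilon_j=1$, so instead I would average over the whole group: define $\bar\mu:=\frac1k\sum_{i=0}^{k-1}(R_{i/k})_\ast\mu$. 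Since conjugation by each $f_j$ permutes $G$, one gets $T_\ast\bar\mu=\bar\mu$, i.e. $\bar\mu$ is invariant for $(F,\bp)$ and is by construction $R_{1/k}$-invariant, hence $s$-invariant. It remains to show $\bar\mu=\mu$, equivalently $(R_{1/k})_\ast\mu=\mu$.

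For that, I would exploit forward minimality together with the ergodic/extremal structure: by Lemma~\ref{lem:exiinvmea}, $\mu$ and each $(R_{i/k})_\ast\mu$ are nonatomic with full support. Suppose $(R_{1/k})_\ast\mu\ne\mu$. The cyclic group $G$ acts on the (compact, convex) simplex of invariant measures for $(F,\bp)$; decompose $\mu$ into ergodic components for the Markov chain. Forward minimality forces the collection of ergodic stationary measures to have full support, and the $G$-action permutes ergodic components. The conjugacy relation $\pi\circ f_j=\tilde f_j\circ\pi$ from Remark~\ref{rem:00}, pushing everything down to the IFS $\widetilde F$ with $L(\widetilde F,d)=\{0\}$, lets me reduce to showing that an invariant measure of a forward-minimal IFS with no positive preserved distance is itself rotation-rigid only under the trivial rotation — but the cleanest route is simply: $\mu$ and $(R_{1/k})_\ast\mu$ are mutually singular or equal (two ergodic components are disjoint or coincide); if they are singular their supports are disjoint, contradicting that both have full support $\bS^1$; hence they coincide. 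Therefore $\mu=(R_{1/k})_\ast\mu$, so $\mu$ is $s$-invariant for every $s\in L$.

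The hard part will be making the "ergodic decomposition commutes with the $G$-action, hence components are permuted, hence full support forces equality" argument fully rigorous at the level of general (not necessarily ergodic) invariant $\mu$: one must be careful that $\mu$ need not be ergodic, so instead of working with $\mu$ directly I would argue with the measure $\mu - (R_{1/k})_\ast\mu$ (a signed measure of total mass zero), show its positive and negative parts are both invariant, hence by Lemma~\ref{lem:exiinvmea} both have full support unless they vanish, and then derive a contradiction from the support structure of circle homeomorphisms preserving the distance $1/k$ — equivalently, I would pass through the semiconjugacy $\pi$ of Remark~\ref{rem:00} and transfer the problem to showing an invariant measure downstairs lifts uniquely, which is where the topology of $\bS^1$ and monotonicity of the maps do the real work.
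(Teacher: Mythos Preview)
Your approach is far more involved than necessary and contains a genuine gap at the decisive step. When you argue that if $\mu$ and $(R_{1/k})_\ast\mu$ are distinct ergodic stationary measures then they are mutually singular, ``hence their supports are disjoint, contradicting that both have full support $\bS^1$'', this inference is simply false: mutual singularity does \emph{not} imply disjoint supports (think of Lebesgue measure versus any singular continuous measure of full support). Moreover, $\mu$ need not be ergodic, and your fallback via the signed measure $\mu-(R_{1/k})_\ast\mu$ stalls for a different reason: you never established that $(R_{1/k})_\ast\mu$ is itself invariant (only the group average $\bar\mu$ is), and with orientation-reversing maps present it generally is not, so the Hahn decomposition of that signed measure has no reason to consist of invariant pieces. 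In effect you are trying to smuggle in uniqueness of the stationary measure, which in the paper is proved \emph{after} this lemma (Corollary~\ref{cor:lem:1kinv}) and indeed relies on it; your route is circular.

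The paper's proof is direct and avoids all of this machinery. Fix $s\in L(F,d)$ and choose an interval $I$ of length $s$ with $\mu(I)$ maximal among all intervals of that length. Since $s\in L(F,d)=L(F^{-1},d)$, each $f_j^{-1}(I)$ is again an interval of length $s$; the invariance identity $\mu(I)=\sum_j p_j\,\mu(f_j^{-1}(I))$ together with maximality and non-degeneracy of $\bp$ forces $\mu(f_j^{-1}(I))=\mu(I)$ for every $j$. Iterating and using forward minimality (the left endpoints of such preimages are dense) and continuity of $a\mapsto\mu([a,R_s(a)])$ (which holds because $\mu$ is nonatomic by Lemma~\ref{lem:exiinvmea}) gives that \emph{every} interval of length $s$ has the same $\mu$-measure. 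A two-line interval-splitting computation then yields $\mu((c,d))=\mu((R_s(c),R_s(d)))$ for every arc, i.e.\ $(R_s)_\ast\mu=\mu$. No commutation relations, no ergodic decomposition, no group averaging are needed.
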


\begin{proof}
Let $\mu$ be an invariant probability measure for $(F,\bp)$. Let $s\in L(F,d)$.
Consider an arbitrary interval $I$ of length $s$ satisfying $\mu(I)
\geq\mu(I')$ for all other intervals $I'$ of length $s$. By invariance
of $\mu$ we have $\mu(I)=\sum_{j} p_j \mu(f_j^{-1}(I))$. Hence, since
$\bp$ is non-degenerate, it follows that
$\mu(I)=\mu(f_j^{-1}(I))$ for every $j$.

Since $I$ is of length $s\in L(F,d)=L(F^{-1},d)$, the interval
$f_j^{-1}(I)$ is also of length $s$ for any $j$.
More generally, the $\mu$-measure of the image of $I$ under arbitrary
finite concatenations of functions from $F^{-1}$ is an interval of
length $s$ and of measure $\mu(I)$.
By forward minimality and continuity of the maps in $F$ it therefore
follows that all intervals of length $s$ have the same $\mu$-measure
equal to $\mu(I)$.

This property implies that $\mu$ is $s$-invariant. Indeed,
consider an arbitrary interval $(c,d)$ in $\mathbb{S}^1$, where
$d=R_\alpha(c)$, for some $0<\alpha\leq1/2$. If $\alpha$ is larger
than $s$ then
\[
\begin{split} \mu((c,d)) &=\mu\bigl(\bigl(c,R_{s}(c)
\bigr)\bigr)+ \mu\bigl(\bigl(R_{s}(c),d\bigr)\bigr) = \mu\bigl(
\bigl(d,R_{s}(d)\bigr)\bigr)+ \mu\bigl(\bigl(R_{s}(c),d
\bigr)\bigr)
\\
&=\mu\bigl(\bigl(R_{s}(c),R_{s}(d)\bigr)\bigr). \end{split}
\]
Otherwise, if $\alpha$
is smaller than or equal to $s$, then
\[
\begin{split} \mu((c,d))+ \mu\bigl(\bigl(d,R_{s}(c)
\bigr)\bigr) &= \mu\bigl(\bigl(c,R_{s}(c)\bigr)\bigr) =\mu(I) = \mu\bigl(
\bigl(d,R_{s}(d)\bigr)\bigr)
\\
&= \mu\bigl(\bigl(d,R_{s}(c)\bigr)\bigr)+ \mu\bigl(
\bigl(R_{s}(c),R_{s}(d)\bigr)\bigr), \end{split} %
\]
which also implies $\mu((c,d))=\mu((R_{s}(c),R_{s}(d))$.
\end{proof}

Given a measurable transformation $\varPhi\colon\bS^1\to\bS^1$ and a
probability measure $\mu$, we denote by $\varPhi_\ast\mu$ the \emph
{pushforward} of $\mu$ defined by $\varPhi_\ast\mu(E)=\mu(\varPhi^{-1}(E))$
for each Borel set $E$ of $\bS^1$.

\begin{remark}
Recall that if $\mu$ is nonatomic (i.e.\ continuous) and fully
supported Borel measure on $\bS^1$ then its distribution function
defines a homeomorphism $\varPhi\colon\bS^1\to\bS^1$ and $\varPhi^{-1}_\ast\mu
=\mu_{\rm Leb}$.
\end{remark}

We state a preliminary result.%
\footnote{The main idea is well known (see, for example,~\cite[p.
118]{LeJan87} and~\cite{GGKV14}, where the authors also consider a
measurable bijection analogous to the here defined conjugation map $\varPhi_{-}$).}

\begin{proposition} \label{avcontr}
Let $(F,\bp)$ be an IFS with probabilities of homeomorphisms on $\bS
^1$ which is
backward
minimal.
Let $\mu_-$ be an invariant measure for $(F^{-1},\bp)$ and let
$\varPhi_{-}\colon\bS^1\to\bS^1$ be defined by $\varPhi_{-}(x):=\mu_-([0,x])$.
Then
\[
\rho(x,y):=\min\bigl\{ \mu_{-}\bigl([x,y]\bigr), \mu_{-}
\bigl([y,x]\bigr)\bigr\}
\]
is a metric on $\bS^1$ and $(F,\bp)$ is non-expansive on average with
respect to $\rho$.

The IFS $G=\{g_j\}_{j=1}^N$ given by the maps $g_j:=\varPhi_{-}\circ
f_j\circ\varPhi_{-}^{-1}$, $j=1,\ldots,N$, with probabilities $\bp$ is
non-expansive on average with respect to $d$ and we have
$L(G,d)=L(F,\rho)$.
\end{proposition}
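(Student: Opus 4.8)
The plan is to reduce all four assertions to the single observation that $\varPhi_{-}$ is an orientation-preserving homeomorphism of $\bS^1$ under which $\rho$ is precisely the pullback of the standard metric $d$. First I would record that, since $F$ is backward minimal, the IFS $F^{-1}$ is forward minimal, so Lemma~\ref{lem:exiinvmea} applies to the invariant measure $\mu_-$ of $(F^{-1},\bp)$: it is nonatomic and has full support. Hence its distribution function $\varPhi_{-}(x)=\mu_{-}([0,x])$ is a continuous, strictly increasing (in the cyclic sense) bijection of $\bS^1$, i.e.\ an orientation-preserving homeomorphism, with $(\varPhi_{-})_\ast\mu_{-}=\mu_{\rm Leb}$. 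A direct computation from the definitions then gives
\[
\rho(x,y)=d\bigl(\varPhi_{-}(x),\varPhi_{-}(y)\bigr)\qquad\text{for all }x,y\in\bS^1,
\]
since the $\mu_{-}$-measure of either of the two arcs between $x$ and $y$ equals the $d$-length of the corresponding arc between $\varPhi_{-}(x)$ and $\varPhi_{-}(y)$. In particular $\rho$, being the pullback of the metric $d$ by the bijection $\varPhi_{-}$, is itself a metric on $\bS^1$, and $\varPhi_{-}\colon(\bS^1,\rho)\to(\bS^1,d)$ is an isometry.

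Next I would prove non-expansiveness of $(F,\bp)$ with respect to $\rho$. Fix $x,y\in\bS^1$ and let $J$ be the arc with endpoints $x,y$ of $\mu_{-}$-measure at most $1/2$, so that $\mu_{-}(J)=\rho(x,y)$. Each $f_j$ is a circle homeomorphism, so $f_j(J)$ is again an arc with endpoints $f_j(x),f_j(y)$ — it equals $[f_j(x),f_j(y)]$ or $[f_j(y),f_j(x)]$ according to whether $f_j$ preserves or reverses orientation — and in either case $\rho(f_j(x),f_j(y))\le\mu_{-}(f_j(J))$. The hypothesis that $\mu_{-}$ is invariant for $(F^{-1},\bp)$ says exactly that $\sum_{j}p_j\,\mu_{-}(f_j(E))=\mu_{-}(E)$ for every Borel set $E\subset\bS^1$; applied to $E=J$ this yields
\[
\sum_{j=1}^{N}p_j\,\rho\bigl(f_j(x),f_j(y)\bigr)\ \le\ \sum_{j=1}^{N}p_j\,\mu_{-}\bigl(f_j(J)\bigr)\ =\ \mu_{-}(J)\ =\ \rho(x,y),
\]
which is \eqref{0214} with $c=1$.

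Finally the statements about $G=\{g_j\}_{j=1}^N$ with $g_j=\varPhi_{-}\circ f_j\circ\varPhi_{-}^{-1}$ come for free by transporting everything through the isometry $\varPhi_{-}$. For all $u,v\in\bS^1$ and every $j$ one has $d(u,v)=\rho(\varPhi_{-}^{-1}(u),\varPhi_{-}^{-1}(v))$ and $d(g_j(u),g_j(v))=\rho(f_j(\varPhi_{-}^{-1}(u)),f_j(\varPhi_{-}^{-1}(v)))$; averaging over $j$ with weights $p_j$ and using the previous paragraph shows $(G,\bp)$ is non-expansive on average with respect to $d$. For the last claim, as $(u,v)$ ranges over $\bS^1\times\bS^1$ so does $(x,y):=(\varPhi_{-}^{-1}(u),\varPhi_{-}^{-1}(v))$, and the two displayed equalities turn the condition ``$d(u,v)=s$ implies $d(g_j(u),g_j(v))=s$ for all $j$'' defining membership of $s$ in $L(G,d)$ into the condition ``$\rho(x,y)=s$ implies $\rho(f_j(x),f_j(y))=s$ for all $j$'' defining membership of $s$ in $L(F,\rho)$; hence $L(G,d)=L(F,\rho)$.

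The only genuinely delicate point — and the place where the topology of the circle is used — is the orientation bookkeeping in the second paragraph: one cannot assert $f_j([x,y])=[f_j(x),f_j(y)]$, because $f_j$ may reverse orientation, only that $f_j(J)$ is one of the two arcs joining $f_j(x)$ and $f_j(y)$, which is exactly enough for the inequality $\rho(f_j(x),f_j(y))\le\mu_{-}(f_j(J))$; and one must be careful to invoke invariance of $\mu_{-}$ for the inverse system $(F^{-1},\bp)$ rather than for $(F,\bp)$, since it is the former that delivers $\sum_j p_j\mu_{-}(f_j(E))=\mu_{-}(E)$.
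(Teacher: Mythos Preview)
Your proof is correct and follows essentially the same route as the paper: both invoke Lemma~\ref{lem:exiinvmea} for $(F^{-1},\bp)$ to make $\varPhi_-$ a homeomorphism, and both deduce non-expansiveness from the identity $\sum_j p_j\,\mu_-(f_j(E))=\mu_-(E)$ applied to arcs. Your explicit identification $\rho(x,y)=d(\varPhi_-(x),\varPhi_-(y))$ is a tidy organizing device---the paper leaves it implicit and instead verifies the metric axioms directly and asserts $L(G,d)=L(F,\rho)$ without comment---but the underlying argument is the same.
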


\begin{proof}
Let $(F,\bp)$ be an IFS with probabilities of homeomorphisms on $\bS^1$
which is backward
minimal.
Let $\varPhi_{-}(x)=\mu_{-}([0,x])$, where $\mu_-$ is an invariant
probability measure for $(F^{-1},\bp)$, and define
\[
\rho(x,y):=\min\bigl\{ \mu_{-}\bigl([x,y]\bigr), \mu_{-}
\bigl([y,x]\bigr)\bigr\}.
\]
Clearly, $L(G,d)=L(F,\rho)$.
By Lemma~\ref{lem:exiinvmea} applied to $(F^{-1},\bp)$, $\mu_{-}$ is
nonatomic and has full support 
and hence we have $\rho(x,y)\ge0$ and $\rho(x,y)=0$ if and only if
$x=y$. Moreover, clearly $\rho(x,y)=\rho(y,x)$ and $\rho(x,y)\le\rho
(x,z)+\rho(z,y)$. Hence, $\rho$ defines a metric on $\bS^1$.
The definition of $\rho$ and the invariance of $\mu_-$ together imply
\[
\begin{split} \sum_{j=1}^N
p_j \rho\bigl(f_j(x),f_j(y)\bigr) &= \sum
_{j=1}^N p_j \min\bigl\{
\mu_{-}\bigl(\bigl[f_j(x),f_j(y)\bigr]\bigr),
\mu _{-}\bigl(\bigl[f_j(y),f_j(x)\bigr]
\bigr)\bigr\}
\\
&= \sum_{j=1}^N p_j \min
\bigl\{ \mu_{-}\bigl(f_j\bigl([x,y]\bigr)\bigr),
\mu_{-}\bigl(f_j\bigl([y,x]\bigr)\bigr) \bigr\}
\\
&\le \min \Biggl\{\sum_{j=1}^N
p_j\mu_{-}\bigl(f_j\bigl([x,y]\bigr)\bigr),
\sum_{j=1}^N p_j
\mu_{-}\bigl(f_j\bigl([y,x]\bigr)\bigr) \Biggr\}
\\
&= \min \bigl\{\mu_{-}\bigl([x,y]\bigr), \mu_{-}\bigl([y,x
\bigr)]) \bigr\} =\rho(x,y) , \end{split} %
\]
which proves that $(F,\bp)$ is non-expansive on average with respect to
$\rho$.
\end{proof}

The following result can be regarded as the heart of the paper.%
\footnote{A similar statement (without proof and stated for systems
where all homeomorphisms preserve orientation) can be found for example
in~\cite{GGKV14}.}

\begin{theorem}\label{teo:lval2}
Let $(F,\bp)$ be an IFS with probabilities of homeomorphisms on $\mathbb
{S}^1$ which is forward minimal and non-expansive on average with
respect to some metric $\rho$.
Then $\rho(Z_n^x,Z_n^y)$ converges almost surely to an $L$-valued
random variable for any $x,y \in\mathbb{S}^1$, where $L=L(F,\rho)$.
\end{theorem}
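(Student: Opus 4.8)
The plan is to exploit the fact that, by non-expansiveness on average, the process $\rho_n:=\rho(Z_n^x,Z_n^y)$ is a bounded nonnegative supermartingale with respect to the natural filtration $\mathcal F_n=\sigma(I_1,\ldots,I_n)$. Indeed, from \eqref{0214} applied at the pair $(Z_n^x,Z_n^y)$ one gets $\mathbb E[\rho_{n+1}\mid\mathcal F_n]\le\rho_n$, so by the martingale convergence theorem $\rho_n$ converges almost surely to some $[0,1/2]$-valued random variable $\rho_\infty$. The whole content of the theorem is therefore to identify the limit: I must show that $\rho_\infty\in L=L(F,\rho)$ almost surely.

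For the identification step I would argue by contradiction on the event $\{\rho_\infty=s\}$ for some $s\notin L$. First note that since $L$ is closed (by the remark preceding Lemma~\ref{lem:L}) its complement in $[0,1/2]$ is open, so it suffices to rule out $\rho_\infty\in(a,b)$ for any subinterval $(a,b)$ disjoint from $L$. The key geometric input is a \emph{uniform strict contraction at distances away from $L$}: I claim there is, for each such $[a,b]\subset[0,1/2]\setminus L$, a constant $\kappa=\kappa(a,b)<1$ and a finite concatenation length $m$ so that, for every pair $(x,y)$ with $\rho(x,y)\in[a,b]$, there is a word $\omega_1\cdots\omega_m$ of positive probability with $\rho\big((f_{\omega_m}\circ\cdots\circ f_{\omega_1})(x),(f_{\omega_m}\circ\cdots\circ f_{\omega_1})(y)\big)\le\kappa\,\rho(x,y)$. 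This is where forward minimality enters: if $\rho(x,y)=t\notin L$, then by definition of $L$ some $f_j$ strictly changes this distance, i.e. $\rho(f_j(x),f_j(y))\ne t$; monotonicity on the circle forces $\rho(f_j(x),f_j(y))<t$ unless the pair ``wraps'' — and one handles the wrapping/expansion by first using forward minimality to move one of the points into a favorable position, then applying such a $j$, keeping everything in a region where the relevant distance only shrinks. A compactness argument over the compact set $\{(x,y):\rho(x,y)\in[a,b]\}$ upgrades this to a uniform $\kappa<1$ and uniform $m$.

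Granting that lemma, the conclusion follows from a standard Borel--Cantelli / conditional-independence argument: on the event $\{\rho_\infty\in(a,b)\}$, from some time $N$ on all $\rho_n$ lie in $(a,b)$; but in each block of $m$ consecutive steps the word giving the $\kappa$-contraction is taken with probability at least $\delta>0$ (where $\delta$ is a lower bound on the probability of a length-$m$ word, positive since $\bp$ is non-degenerate), independently across disjoint blocks; hence almost surely infinitely many such blocks succeed, forcing $\rho_n$ to drop below $a$ infinitely often — contradicting $\rho_n\to\rho_\infty\in(a,b)$. Since $[0,1/2]\setminus L$ is a countable union of such intervals, $\rho_\infty\in L$ a.s.

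The main obstacle is the geometric lemma — specifically, getting a \emph{contraction} (not merely a change) in the distance uniformly over all pairs at a given distance $t\notin L$, handling the fact that the circle metric $\rho$ is the minimum of two arc-lengths so that a map can ``flip'' which arc is shorter, and that an individual $f_j$ might even expand the relevant arc. The resolution is to combine two ideas: (i) non-expansiveness on average already prevents all arcs from growing simultaneously in a strong sense, and (ii) forward minimality lets one prepend a suitable word to place the pair where the distance-changing map acts as a genuine contraction; monotonicity of circle homeomorphisms then guarantees that once the shorter arc strictly shrinks under a word, composing further non-expansive-on-average words cannot undo this, so the bound propagates. Packaging (i)+(ii) into the clean uniform statement above, via compactness of $\{(x,y):\rho(x,y)\in[a,b]\}$ and of the word space, is the technical core of the proof.
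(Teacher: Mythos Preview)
Your supermartingale step is exactly the paper's opening move, and your Borel--Cantelli/conditional-independence machinery in the last paragraph is sound. The gap is entirely in the ``geometric lemma,'' and it is a genuine one.

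You write that ``if $\rho(x,y)=t\notin L$, then by definition of $L$ some $f_j$ strictly changes this distance.'' That is not what $t\notin L$ says: it only guarantees the existence of \emph{some} pair $(a,b)$ with $\rho(a,b)=t$ and some $j$ with $\rho(f_j(a),f_j(b))\ne t$. For a \emph{given} pair $(x,y)$ at distance $t$ every single $f_j$ may well preserve the distance; you must produce a word carrying $(x,y)$ near the bad pair $(a,b)$. Your fix, ``forward minimality lets one prepend a suitable word to place the pair,'' does not work as stated: forward minimality lets you place \emph{one} point wherever you like, but the second point is dragged along by the same word, and it can land near either of the two points at $\rho$-distance $t$ from the target (call them $b$ and $c$). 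If it always lands near $c$ rather than $b$, you never reach a configuration where any map changes the distance. Nothing in your sketch rules this out, and ``monotonicity of circle homeomorphisms'' does not help here --- an orientation-preserving homeomorphism need not send the short arc to the short arc, so the two ``circles of pairs'' $C_1,C_2$ at distance $t$ are not individually invariant.

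This $b$/$c$ ambiguity is precisely the heart of the paper's proof. The paper does \emph{not} try to establish a deterministic contraction lemma. Instead it first proves, via forward minimality plus Feller recurrence, that for a.e.\ $\omega$ the single trajectory $(Z_n(x,\omega))$ visits every open set infinitely often, each time followed by any prescribed $f_j$. It then introduces the notion of a point $a$ being \emph{clockwise nice} / \emph{counterclockwise nice} (the pair $(Z_n^x,Z_n^y)$ approaches $(a,b)$, respectively $(a,c)$, infinitely often) and shows by a short topological argument that every $a$ is nice in \emph{both} senses: if all points were only clockwise nice, then $c$ being clockwise nice would force $a$ to be counterclockwise nice, a contradiction; then forward minimality propagates niceness everywhere. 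Once every point is nice, the pair $(Z_n^x,Z_n^y)$ is guaranteed to approach the specific bad configuration $(a,b)$ followed by $f_j$, producing a definite jump $|\rho_{n+1}-\rho_n|>\varepsilon$ infinitely often, which directly contradicts convergence of $\rho_n$ --- no uniform $\kappa<1$ and no Borel--Cantelli are needed. Your lemma is in fact true (it follows from the theorem), but proving it independently requires an argument of exactly this flavor, which your sketch does not supply.
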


As an immediate corollary of Proposition~\ref{avcontr} and
Theorem~\ref{teo:lval2} we get the following result. This type of
result is usually referred to as Antonov's theorem (see~\cite{A98},
where
all maps in the IFS are assumed to preserve orientation, see also~\cite
{GGKV14,KN04}).
Also in our generality, the present corollary is not new and follows
(although not explicitly stated)
from results
by Malicet~\cite{M17} who studied
an even more general setting (
without
assuming minimality).

\begin{corollary}
Let $(F,\bp)$ be an IFS with probabilities of homeomorphisms on
$\mathbb{S}^1$ which is forward
and backward
minimal. Then exactly one of the
following cases occurs:
\begin{itemize}
\item[1)] (synchronization) For any $x,y\in\bS^1$ and almost every
$\omega\in\varSigma$ we have\break $d(Z_n(x,\omega),Z_n(y,\omega))\to0$ as
$n\to\infty$.
\item[2)] (factorization) There exists a positive integer $k \geq2$
and a
homeomorphism $\varPsi\colon\bS^1\to\bS^1$ of order $k$ (that is,
$\varPsi^k={\text id}$) which commutes with all $f_j$. Moreover, there is
a naturally associated IFS $\check{F}=\{\check{f}_j\}$
where each map $\check{f}_j$ is a
topological factor%
\footnote{We call a map $g\colon\bS^1\to\bS^1$ a \emph{topological
factor} of $f\colon\bS^1\to\bS^1$ if there exists a continuous
surjective map $\pi\colon\bS^1\to\bS^1$ such that $\pi\circ f=g\circ
\pi$.}
(with a common factoring map) of the corresponding map $f_j$ of $F$
such that $(\check{F},\bp)$ has the synchronization property claimed in item
1).
\item[3)] (invariance) All maps $f_j$ are conjugate (with a common
conjugation map) to an isometry (with respect to $d$). There exists a
probability measure which is\vadjust{\eject} invariant for all maps $f_j$,
$j=1,\ldots,N$, and hence also uniquely invariant for $(F,\bp)$.
\end{itemize}
\end{corollary}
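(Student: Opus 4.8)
The plan is to reduce everything to Theorem~\ref{teo:lval2} by first conjugating to a system that is non-expansive on average, and then to read off the three cases from the dichotomy for the set of preserved distances in Lemma~\ref{lem:L}. First I would use backward minimality to apply Proposition~\ref{avcontr}. This produces an invariant measure $\mu_-$ for $(F^{-1},\bp)$ (which exists by Remark~\ref{rem:existamea}), the homeomorphism $\varPhi_-(x)=\mu_-([0,x])$, and the metric $\rho(x,y)=\min\{\mu_-([x,y]),\mu_-([y,x])\}$ with respect to which $(F,\bp)$ is non-expansive on average. Since $\rho(x,y)=d(\varPhi_-(x),\varPhi_-(y))$, the map $\varPhi_-$ is a uniformly bicontinuous conjugacy onto the IFS $G=\{g_j\}$, $g_j=\varPhi_-\circ f_j\circ\varPhi_-^{-1}$, which is non-expansive on average with respect to $d$, is forward minimal (conjugacy preserves this), and satisfies $L(G,d)=L(F,\rho)=:L$. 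As $(F,\bp)$ is forward minimal and non-expansive on average for $\rho$, Theorem~\ref{teo:lval2} applies, and $\rho(Z_n^x,Z_n^y)$ converges almost surely to an $L$-valued random variable for all $x,y$. By Lemma~\ref{lem:L} there are exactly three possibilities for $L$, and I claim these correspond to the three listed cases.

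If $L=\{0\}$, the limit is almost surely $0$, so $\rho(Z_n^x,Z_n^y)\to0$; uniform bicontinuity of $\varPhi_-$ then yields $d(Z_n^x,Z_n^y)\to0$, which is case~1. If $L=[0,1/2]$, Lemma~\ref{lem:L} says all $f_j$ are $\rho$-isometries, equivalently all $g_j$ are $d$-isometries, i.e.\ rotations and reflections; these preserve $\mu_{\mathrm{Leb}}$, so $\mu:=(\varPhi_-^{-1})_\ast\mu_{\mathrm{Leb}}$ is invariant for every $f_j$, hence for $(F,\bp)$, and each $f_j=\varPhi_-^{-1}\circ g_j\circ\varPhi_-$ is conjugate to an isometry. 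Uniqueness follows from Lemma~\ref{lem:1kinv-b} applied to $G$: any $(G,\bp)$-invariant measure is $s$-invariant for every $s\in L(G,d)=[0,1/2]$, and since the admissible $s$ form a closed subgroup of $\bS^1$ containing $[0,1/2]$, the measure is rotation invariant, hence equal to $\mu_{\mathrm{Leb}}$. This is case~3.

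The remaining possibility is $L$ finite with smallest positive element $1/k$, $k\ge2$, which I would identify with factorization. A lift computation shows that $1/k\in L(G,d)$ forces $g_j(x+1/k)=g_j(x)\pm1/k$, the sign being that of the orientation of $g_j$; in particular every $g_j$ permutes the fibers of $\pi(x)=kx\bmod1$, so by Remark~\ref{rem:00} each $g_j$ descends to $\check f_j$ with $\pi\circ g_j=\check f_j\circ\pi$ and $L(\check F,d)=\{0\}$, where $\check F=\{\check f_j\}$. Taking the common factoring map to be $\pi\circ\varPhi_-$ gives $(\pi\circ\varPhi_-)\circ f_j=\check f_j\circ(\pi\circ\varPhi_-)$, so each $\check f_j$ is a topological factor of $f_j$. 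Synchronization of $(\check F,\bp)$ is then immediate from the first paragraph: the $G$-distances lock onto some $i/k\in L(G,d)$, and since $\pi$ collapses $i/k$ to $0$ and is $k$-Lipschitz near it, $d(u,v)\to i/k$ implies $d(\pi(u),\pi(v))\to0$; as $\pi\circ\varPhi_-$ is surjective this gives $d(\check Z_n^{\check x},\check Z_n^{\check y})\to0$ almost surely for all $\check x,\check y$. Finally $\varPsi:=\varPhi_-^{-1}\circ R_{1/k}\circ\varPhi_-$ is a homeomorphism with $\varPsi^k=\mathrm{id}$, which commutes with each $f_j$ when $g_j$ preserves orientation and in general generates the deck group of the covering $\pi$.

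To finish I would record mutual exclusivity at the level of properties. Isometric systems preserve $\rho$ and hence never synchronize, which separates case~3 from cases~1--2; in the factorization case the pair $x,\varPsi(x)$ satisfies $\rho(Z_n^x,Z_n^{\varPsi x})\equiv1/k\neq0$ (since $R_{1/k}$ is a $d$-isometry intertwined with the $g_j$), so case~2 excludes case~1, while a factor of an isometric system is again isometric and cannot synchronize, separating case~2 from case~3. Because $L$ falls into exactly one of the three classes of Lemma~\ref{lem:L}, exactly one case occurs. The main obstacle I anticipate is the orientation bookkeeping in the factorization step: when orientation-reversing maps are present and $k\ge3$ one only obtains $g_jR_{1/k}g_j^{-1}=R_{-1/k}$, so $\varPsi$ normalizes rather than centralizes the IFS, and one must verify carefully that the quotient by $\langle R_{1/k}\rangle$ is nevertheless well defined, inherits forward (and backward) minimality, and transports the non-expansive-on-average structure, so that the synchronization conclusion for $(\check F,\bp)$ is fully justified.
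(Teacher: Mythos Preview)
Your proposal follows exactly the paper's route: conjugate via Proposition~\ref{avcontr} to obtain the metric $\rho$ and the non-expansive-on-average system $G$, apply Theorem~\ref{teo:lval2}, and read off the three cases from the trichotomy for $L=L(G,d)$ in Lemma~\ref{lem:L}, invoking Remark~\ref{rem:00} for the factorization case and Lemma~\ref{lem:1kinv-b} for uniqueness in the isometric case. Your treatment of case~2) is in fact more careful than the paper's own sketch---you correctly separate the order-$k$ homeomorphism $\varPsi=\varPhi_-^{-1}\circ R_{1/k}\circ\varPhi_-$ from the factoring map $\pi\circ\varPhi_-$ (the paper writes a single map $\varPsi=\pi\circ\varPhi_-^{-1}$ for both roles), and the orientation obstacle you flag is genuine: the commutation $\varPsi f_j=f_j\varPsi$ holds only when $g_j$ preserves orientation or $k=2$, and in general one only obtains $f_j\varPsi f_j^{-1}=\varPsi^{\pm1}$, a point the paper does not address.
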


\begin{proof}
Apply Proposition~\ref{avcontr} to $(F,\bp)$ and consider the
homeomorphism $\varPhi_-\colon\bS^1\to\bS^1$, and the metric $\rho$ such that
$(F,\bp)$ is non-expansive on average with respect to $\rho$.
Consider the IFS $(G,\bp)$, conjugate to $(F,\bp)$ through the
conjugating map $\varPhi_-$, which is non-expansive on average with
respect to $d$ and recall $L=L(F,\rho)=L(G,d)$.
We consider three cases:

\noindent{\textbf{Case $L=\{0\}$.}} By Theorem \ref{teo:lval2}, we
have $\rho(Z_n^x,Z_n^y) \rightarrow0$ a.s. and thus $d(Z_n^x,Z_n^y)
\rightarrow0
$ a.s., proving item 1).

\noindent{\textbf{Case $L$ finite and nontrivial.}} By Lemma~\ref{lem:L},
$L(G,d)=\{0,1/k,\ldots,{\lfloor{k/2} \rfloor}/k\}$ for some $k\ge2$.
By Remark \ref{rem:00} applied to $(G,d)$, with
$\check{f}_j(x)=\tilde
g_j(x):=k(g_j(x/k)\mod1/k)$ we have $\check{f}_j\circ\varPsi=\varPsi\circ
f_j$, where $\varPsi=\pi\circ\varPhi_-^{-1}$ with $\pi(x)=kx\mod1$, and the
IFS $(\check{F},\bp)$ satisfies $L(\check{F},d)= L(\widetilde G,d)=\{0\}$.

Since by Lemma \ref{lem:1kinv-b} we have
$\varPhi_-^{-1}(R_{1/k}(x))= (\varPhi_-^{-1}(x) +1/k) \mod1$, it follows that
\[
\begin{split} \varPsi\bigl(R_{1/k}(x)\bigr) &= \bigl(\pi
\circ\varPhi_-^{-1}\bigr) \bigl( R_{1/k}(x)\bigr)
\\
&= \pi\bigl(\bigl(\varPhi_-^{-1}(x)+1/k\bigr) \mod1 \bigr)=\bigl(\pi
\circ\varPhi_-^{-1}\bigr) (x)= \varPsi(x), \end{split} %
\]
and thus
$\varPsi$ 
is an order $k$ homeomorphism having the claimed properties, proving
item 2).

\noindent{\textbf{Case $L$ infinite.}} By Lemma~\ref{lem:L}, we have
$L(G,d)=[0,1/2]$.
All maps in $G$ are thus isometries (with respect to $d$) and hence
simultaneously preserve the Lebesgue measure. The measure
$\mu_+:=(\varPhi_-^{-1})_\ast\mu_{Leb}$ is invariant for all maps of $F$,
and by Lemma \ref{lem:1kinv-b} uniquely invariant for
$(F,\bp)$, proving item 3).
\end{proof}

\begin{remark}
IFSs with nontrivial $L$ can be regarded as degenerated systems. For a
typical system satisfying the conditions of Theorem~\ref{teo:lval2} we
thus have that $\rho(Z_n^x,Z_n^y) \rightarrow0$ as $n \rightarrow
\infty$ a.s. for any $x,y \in\mathbb{S}^1$. Using techniques from
~\cite[Theorem D]{M17} it
seems plausible that it should be possible to prove that
convergence is exponential (see also~\cite{LeJan87}), and that
$(F,\bp)$ is contractive on average with respect to some metric in this case.
\end{remark}

\begin{proof}[Proof of Theorem \ref{teo:lval2}]
Let $(F,\bp)$ be a forward minimal IFS which is non-expansive on
average with respect to $\rho$. Let $\mathscr{F}_n$ be the sigma field
generated by $I_1,\ldots,I_n$.
Fix $x,y\in\mathbb{S}^1$.
Note that $Z_n^x$ and $Z_n^y$ are both measurable with respect to
$\mathscr{F}_n$ and
\begin{eqnarray*}
\bE\bigl( \rho\bigl(Z_{n+1}^x,Z_{n+1}^y
\bigr) | \mathscr{F}_n\bigr) &=& \bE\bigl( \rho\bigl( {f}_{I_{n+1}}
\bigl(Z_n^x\bigr), {f}_{I_{n+1}}\bigl(Z_n^y
\bigr)\bigr) | \mathscr {F}_n \bigr)
\\
&=& \sum_{j=1}^N p_j \rho
\bigl( {f}_{j}\bigl(Z_n^x\bigr),
{f}_{j}\bigl(Z_n^y\bigr)\bigr) \le \rho
\bigl(Z_n^x,Z_n^y\bigr),
\end{eqnarray*}
so the stochastic sequence $(\rho( Z_n^x,Z_n^y))_{n \geq0}$ is a
bounded super-martingale with respect to the filtration $\{\mathscr
{F}_n\}$.
By the Martingale convergence theorem it follows that
$\rho(Z_n^x,Z_n^y) \stackrel{\text{a.s.}}{\rightarrow} \xi$ as $n
\rightarrow\infty$ for some random variable $\xi=\xi^{x,y}$.\vadjust{\eject}

Let $L=L(F,\rho)$.
We will now show that $\xi$ is $L$-valued a.s., that is, we will show
that the distance between any two points $a,b \in\mathbb{S}^1$
with $\rho(a,b)=\xi(\omega)$ is preserved by all the maps in $F$ for
$P$ a.a. $\omega\in\varSigma$.

We will show that any two points $a,b \in\bS^1$ with $\rho(a,b)=\xi
(\omega)$ can simultaneously be (almost) reached by $\{Z_n(x,\omega
),Z_n(y,\omega)\}$ followed by an application of an arbitrary map for
infinitely many $n$ and that this leads to a contradiction if the
distance between some points with distance $\xi(\omega)$ is not
preserved by all maps in $F$ for a typical $\omega$.

Let us first prove the following claim that for any $z$ and any index
$j$, any open set in $\bS^1$ will be visited followed by an application
of the map $f_j$ infinitely many times by trajectories $(Z_n(z,\omega
))_{n \geq0}$ corresponding to typical realizations $\omega$.

\begin{claim}
For any $z\in\bS^1$ and any open set $O\subset\bS^1$ and any $j\in\{
1,\ldots,N\}$ we have
\[
P(\varOmega)=1,\quad \text{ where }\quad \varOmega := \bigcap
_{m=1}^\infty\bigcup_{n=m}^\infty
\bigl\{\omega: Z_n(z,\omega) \in O, \omega_{n+1}=j\bigr\}.
\]
\end{claim}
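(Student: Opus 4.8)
The plan is to prove the claim by a Borel–Cantelli type argument, exploiting forward minimality to get a uniform lower bound on the probability of the relevant event over a bounded time window, and then propagating this along the trajectory using the Markov property.

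\textbf{Step 1: A uniform lower bound from forward minimality and compactness.} First I would fix the open set $O$, the index $j$, and cover $\bS^1$ by finitely many open sets. For each point $w\in\bS^1$, forward minimality gives some finite word $\eta=(\eta_1\ldots\eta_{\ell})$ and some $\ell=\ell(w)\ge 0$ with $(f_{\eta_\ell}\circ\cdots\circ f_{\eta_1})(w)\in O$; by continuity of the finite composition there is an open neighborhood $U_w$ of $w$ on which the same word still lands the point inside $O$. Appending the symbol $j$ (so using the word of length $\ell(w)+1$) keeps us inside $O$ at step $\ell(w)$ and forces $\omega_{\ell(w)+1}=j$. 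Compactness of $\bS^1$ yields a finite subcover $U_{w_1},\ldots,U_{w_r}$; let $M:=\max_i(\ell(w_i)+1)$ and let $\delta>0$ be the minimum over $i$ of the Bernoulli probability of the corresponding cylinder of length $\ell(w_i)+1$ (times, if needed, an extra product of $p$'s to pad all words to the common length $M$). Then: for \emph{every} $z'\in\bS^1$, writing things out,
\[
P\bigl(\exists\, n\in\{0,1,\ldots,M-1\}: Z_n(z',\omega)\in O,\ \omega_{n+1}=j\bigr)\ \ge\ \delta\ >\ 0.
\]
This is the key quantitative input and, I expect, the main obstacle — one must be careful that the neighborhoods, the uniform window length $M$, and the uniform lower bound $\delta$ are all extracted correctly, and that padding words to a common length does not destroy the "$\omega_{n+1}=j$ at the right spot" structure (it does not, since we only require the event to happen for \emph{some} $n$ in the window).

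\textbf{Step 2: Iteration via the Markov property and a conditional Borel–Cantelli argument.} With the uniform bound in hand, I would run the trajectory in blocks of length $M$. Let $A_k$ be the event that, during the $k$-th block (times $n=(k-1)M,\ldots,kM-1$), there is some $n$ with $Z_n(z,\omega)\in O$ and $\omega_{n+1}=j$. By Step 1 applied with $z'=Z_{(k-1)M}(z,\omega)$ and the Markov property (conditioning on $\mathscr F_{(k-1)M}$),
\[
P\bigl(A_k \,\big|\, \mathscr F_{(k-1)M}\bigr)\ \ge\ \delta \qquad\text{a.s.}
\]
Hence $\sum_k P(A_k\mid \mathscr F_{(k-1)M})=\infty$ a.s., and the events $A_k$ are each measurable with respect to $\mathscr F_{kM}$. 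By the conditional (Lévy) Borel–Cantelli lemma, $A_k$ occurs for infinitely many $k$ almost surely. Since each occurrence of $A_k$ produces an index $n$ in the $k$-th block with $Z_n(z,\omega)\in O$ and $\omega_{n+1}=j$, and distinct blocks are disjoint, this gives infinitely many such $n$ almost surely — that is, $P(\varOmega)=1$. Finally, since $z\in\bS^1$, $O$, and $j$ were arbitrary, the claim follows. (One could alternatively phrase Step 2 as an induction showing $P\bigl(\bigcup_{n=m}^{m+KM-1}\{\cdots\}\bigr)\ge 1-(1-\delta)^K$ for every $m$ and $K$, then let $K\to\infty$ and $m\to\infty$; this avoids citing the conditional Borel–Cantelli lemma and uses only the Markov property and the bound from Step 1.)
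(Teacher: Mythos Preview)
Your proof is correct and follows essentially the same approach as the paper: both obtain a uniform-in-starting-point lower bound on the probability of visiting $O$ followed by symbol $j$ within a fixed time window (via forward minimality, continuity, and compactness), and then run the chain in blocks to conclude via a Borel--Cantelli type argument. The paper phrases Step~2 exactly as your parenthetical alternative, proving $P\bigl(\bigcap_{m=j}^k A_m^c\bigr)\le (1-s)^{k-j}$ directly from the Markov property rather than invoking the conditional Borel--Cantelli lemma.
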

\begin{proof}
Let $z\in\bS^1$. Consider an open set $O\subset\bS^1$ and an index
$j\in\{1,\ldots,N\}$. By forward minimality, for every $q \in\mathbb
{S}^1$ there exists some positive integer $n_q$ and some $c_q>0$, such that
\begin{equation}
\label{eq:continuityyy} P\bigl(Z_{n_q}^q \in O, I_{n_q+1}=j
\bigr)=P\bigl(Z_{n_q}^q \in O\bigr)P( I_{n_q+1}=j)
>c_q>0.
\end{equation}
Considering the left hand side expression in~\eqref{eq:continuityyy} as
a function of $q$, by continuity (recall the weak Feller property)
one concludes that there exists an open set $O_q$ containing $q$ and
some positive integer $n_q$ and some $c_q'>0$
\[
P\bigl(Z_{n_q}^z \in O, I_{n_q+1}=j\bigr)
>c_q'>0
\]
for any $z \in O_q$. Thus, by compactness, there exists a positive
integer $N$ such that
\[
\inf_{q \in\mathbb{S}^1} P\bigl(Z_n^q \in O,
I_{n+1}=j \text{ for some } n < N\bigr)=:s>0.
\]

Let
\[
\begin{split} A_m &:=\bigl\{ \omega\colon
Z_{n}(z,\omega) \in O,\omega_{n+1}=j, \text{ for some } n\in
\bigl\{mN,\ldots,(m+1)N-1\bigr\}\bigr\}
\\
&= \bigl\{ \omega\colon Z_{n-mN}\bigl(Z_{mN}(z,\omega),
\sigma^{mN}(\omega)\bigr) \in O, \omega_{n+1}=j,
\\
&\phantom{=\{\omega:Z_{n}(z,\omega) \in O,\omega_{n+1}=j, } \text{ for some } n\in\bigl\{mN,\ldots, (m+1)N-1 \bigr\}\bigr\}
. \end{split} %
\]
For any $m\ge1$ we have
\[
\begin{split} P(A_m) &\geq\inf_{q \in\mathbb{S}^1} P
\bigl( \bigl\{ \omega\colon Z_{n-mN}\bigl(q,\sigma^{mN}(\omega)
\bigr) \in O,\omega_{n+1}=j,
\\
&\phantom{=\{\omega:Z_{n}(z,\omega) \in O,\omega_{n+1}=j, } \text{ for some } n\in\bigl\{mN,\ldots, (m+1)N-1 \bigr\}\bigr\}
\bigr)
\\
&= \inf_{q \in\mathbb{S}^1} P\bigl(Z_n^q \in
O,I_{n+1}=j \text{ for some } n < N\bigr)=s>0 \end{split} %
\]
and hence $P(A_m^c) \le1-s$. More generally, we can similarly show that
\[
P \Biggl(\bigcap_{m=j}^k
A_m^c \Biggr) \leq(1-s)^{k-j},
\]
for any $j<k$, which implies
\[
P \Biggl(\bigcap_{j=1}^\infty\bigcup
_{m=j}^\infty A_m\Biggr) =1-P \Biggl(
\bigcup_{j=1}^\infty\bigcap
_{m=j}^\infty A_m^c \Biggr)
=1.
\]
This implies the assertion.
\end{proof}
We can now choose $\varOmega$ with $P(\varOmega)=1$ such that for any $\omega
\in\varOmega$, for any \emph{a priori} fixed index $j$, the trajectory
$(Z_n(x,\omega))_{n \geq0}$ visits infinitely many times any open
interval followed by an application of $f_j$.
Indeed, let $\{a_k\}_k$ be a dense set in $\bS^1$ and for every index
pair $(k,\ell)\in\bN^2$ let $\varOmega^j_{k,\ell}$ be the set provided by
the Claim for the point $x$, an index $j$, and the open set $O_{k,\ell
}=(a_k-1/\ell,a_k+1/\ell)$. Let
\[
\varOmega := \bigcap_{j=1}^N\bigcap
_{k\in\bN}\bigcap_{\ell\in\bN}
\varOmega^j_{k,\ell}
\]
and note that $P(\varOmega)=1$.

By the above, without loss of generality, we can also assume that
$\varOmega$ is such that for every $\omega\in\varOmega$ we have $\rho
(Z_n(x,\omega),Z_n(y,\omega)) \rightarrow\xi(\omega)$ as $n
\rightarrow\infty$.

Fix $\omega\in\varOmega$.
Let $a,b,c$ be points in $\bS^1$, with $\rho(a,b)=\rho(a,c)=\xi(\omega
)$, where
$b$ is obtained from $a$ by a clockwise rotation and $ c$ is obtained
from $a$ by a counter-clockwise rotation.
Note that if $0<\xi(\omega)<1/2$ then the points $a,b,c$ will be
distinct, and
otherwise $b=c$.
By definition of $\varOmega$ we know that if $O_a$ is an open set
containing $a$,
$O_b$ is an open set containing $b$, and $O_c$ is an open set
containing $c$ then there are infinitely many $n$ such that
$Z_n(x,\omega) \in O_a$ and either $Z_n(y,\omega) \in O_b$ or
$Z_n(y,\omega) \in O_c$. We say that $a$
is \emph{clockwise nice} if for arbitrarily small open sets $O_a$ and
$O_b$ containing $a$ and $b$, respectively either $Z_n(x,\omega) \in
O_a$ and $Z_n(y,\omega) \in O_b$ simultaneously or $Z_n(y,\omega) \in
O_a$ and $Z_n(x,\omega) \in O_b$ simultaneously for infinitely many
$n$, and \emph{counterclockwise nice} if for arbitrarily small open
sets $O_a$ and $O_b$ containing $a$ and $b$, respectively either
$Z_n(x,\omega) \in O_a$ and $Z_n(y,\omega) \in O_c$ simultaneously or
$Z_n(y,\omega) \in O_a$ and $Z_n(x,\omega) \in O_c$ simultaneously for
infinitely many $n$. We call $a$ \emph{nice} if $a$ is both clockwise
nice and counterclockwise nice.

\begin{claim}
Any $a \in\bS^1$ is nice.
\end{claim}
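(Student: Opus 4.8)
The plan is to analyse the set $P\subseteq\bS^1\times\bS^1$ of accumulation points of $\bigl(Z_n(x,\omega),Z_n(y,\omega)\bigr)_{n\ge0}$ and to show that the symmetrised set $P^{\mathrm{sym}}:=P\cup\{(v,u)\colon(u,v)\in P\}$ fills the whole ``$\xi$-sphere'' $\varGamma_\xi:=\{(p,q)\colon\rho(p,q)=\xi\}$ --- which is precisely the assertion that every point is nice. Write $\xi:=\xi(\omega)$ and $Z^x_n,Z^y_n$ for $Z_n(x,\omega),Z_n(y,\omega)$, so $\rho(Z^x_n,Z^y_n)\to\xi$. If $\xi\in\{0,1/2\}$ then $b=c$ and every $a$ is nice at once: by the construction of $\varOmega$ the trajectory $(Z^x_n)_n$ enters every open $O_a\ni a$ infinitely often, and for large such $n$ we have $Z^y_n\in O_b$ because $\rho(Z^x_n,Z^y_n)\to\xi=\rho(a,b)$. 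So assume $0<\xi<1/2$. For the metric $\rho$ (which has the structure of the metric produced by Proposition~\ref{avcontr}), the set $\{q\colon\rho(p,q)=\xi\}$ consists of exactly two points $p^{+},p^{-}$, reached from $p$ clockwise and counter-clockwise respectively; these depend continuously on $p$, and $\varGamma_\xi=D^{+}\sqcup D^{-}$ with $D^{\pm}:=\{(p,p^{\pm})\}$ two graphs interchanged by the coordinate flip. The set $P$ is closed, contained in $\varGamma_\xi$ (as $\rho(Z^x_n,Z^y_n)\to\xi$), and satisfies $\pi_1(P)=\bS^1$ by the Claim proved above (which shows $(Z^x_n)_n$ accumulates on all of $\bS^1$). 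A direct unwinding of the definitions, using $\rho(Z^x_n,Z^y_n)\to\xi$, shows that $a$ is clockwise nice iff $(a,a^{+})\in P^{\mathrm{sym}}$ and counter-clockwise nice iff $(a,a^{-})\in P^{\mathrm{sym}}$; hence ``every $a$ is nice'' is equivalent to $P^{\mathrm{sym}}=\varGamma_\xi$, which is what I will prove.

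For $n$ large, $Z^y_n$ lies close to exactly one of $(Z^x_n)^{+},(Z^x_n)^{-}$; let $\epsilon_n\in\{+,-\}$ record which. Suppose first that for \emph{every} $p\in\bS^1$ there is a $\delta>0$ with $\epsilon_n$ constant, $=\theta(p)$ say, for all large $n$ with $Z^x_n\in B(p,\delta)$. Then whenever $Z^x_{n_k}\to p$ one has $Z^y_{n_k}\to p^{\theta(p)}$, so every accumulation point of $(Z^x_n,Z^y_n)_n$ with first coordinate $p$ equals $(p,p^{\theta(p)})$; together with $\pi_1(P)=\bS^1$ this means $P$ is the graph of $\theta$. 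Both $P\cap D^{+}$ and $P\cap D^{-}$ are then closed, and $\pi_1$ maps each $D^{\pm}$ homeomorphically onto $\bS^1$, so $\theta^{-1}(+)$ and $\theta^{-1}(-)$ are disjoint closed sets with union $\pi_1(P)=\bS^1$; connectedness of $\bS^1$ forces one of them to be empty, $\theta$ is constant, $P=D^{+}$ or $D^{-}$, and therefore $P^{\mathrm{sym}}=D^{+}\cup D^{-}=\varGamma_\xi$.

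Otherwise there are $p_0\in\bS^1$ and $\delta_0>0$ with $\epsilon_n=+$ for infinitely many $n$ with $Z^x_n\in B(p_0,\delta_0)$ and also $\epsilon_n=-$ for infinitely many such $n$; shrinking $\delta_0$ along a countable family we get $(p_0,p_0^{+})\in P$ and $(p_0,p_0^{-})\in P$. Fix a word $w=w_1\cdots w_\ell$ and write $f_w:=f_{w_\ell}\circ\cdots\circ f_{w_1}$, so that prescribing $(\omega_{n+1},\dots,\omega_{n+\ell})=w$ gives $Z^x_{n+\ell}=f_w(Z^x_n)$, $Z^y_{n+\ell}=f_w(Z^y_n)$. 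The proof of the Claim proved above goes through verbatim with a whole block $(\omega_{n+1},\dots,\omega_{n+\ell})=w$ prescribed in place of a single symbol, and since the event ``$Z^x_n\in B(p_0,\delta_0)$ and $\epsilon_n=+$'' is $\mathscr F_n$-measurable while $(\omega_{n+1},\dots)$ is independent of $\mathscr F_n$, the conditional Borel--Cantelli lemma yields that a.s.\ infinitely many of those $n$ additionally satisfy $(\omega_{n+1},\dots,\omega_{n+\ell})=w$ (we include this, over a countable family of data, in the choice of $\varOmega$). For such $n$, $Z^x_{n+\ell}=f_w(Z^x_n)$ is close to $f_w(p_0)$ and $Z^y_{n+\ell}=f_w(Z^y_n)$ close to $f_w(p_0^{+})$, whence $(f_w(p_0),f_w(p_0^{+}))\in P$; likewise $(f_w(p_0),f_w(p_0^{-}))\in P$. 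Since $f_w$ is a homeomorphism and these pairs lie in $\varGamma_\xi$, the two distinct points $f_w(p_0^{+}),f_w(p_0^{-})$ are precisely $(f_w(p_0))^{+}$ and $(f_w(p_0))^{-}$; hence both points of the fibre of $\varGamma_\xi$ over $f_w(p_0)$ lie in $P$. As $\{f_w(p_0)\colon w\}$ is dense in $\bS^1$ by forward minimality and $P$ is closed, $D^{+}\cup D^{-}\subseteq P$, so $P=\varGamma_\xi$ and again $P^{\mathrm{sym}}=\varGamma_\xi$. This proves the Claim.

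The crux is the second case: transporting the ``two-valued fibre'' over the single point $p_0$ to every point of $\bS^1$. This is the only place where forward minimality is genuinely combined with the earlier Claim and with the independence of future symbols from the past (through conditional Borel--Cantelli), the point being to make a prescribed word follow exactly those times $n$ at which $\epsilon_n$ takes the favourable value. The remaining steps --- the first case, the reduction to $0<\xi<1/2$, and the two-point structure of the $\xi$-spheres --- are routine.
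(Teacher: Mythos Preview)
Your proof is correct, but it is organised differently from the paper's, and the comparison is instructive.

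The paper proceeds in two short steps. First, a symmetry observation: if every point were only clockwise nice, then in particular the point $c=a^{-}$ would be clockwise nice, i.e.\ the unordered pair $\{Z_n^x,Z_n^y\}$ would visit $(c,c^{+})=(a^{-},a)$ infinitely often --- but that is precisely the statement that $a$ is counterclockwise nice, a contradiction. So the (closed) sets of clockwise--nice and counterclockwise--nice points are both nonempty, cover $\bS^1$, and by connectedness must intersect; any point in the intersection is nice. Second, the paper asserts that niceness is preserved by every $f_j$ (``by definition of $\varOmega$''), and forward minimality plus closedness finishes.

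Your argument replaces the first step by a structural dichotomy on the accumulation set $P$: either $P$ is single-valued over $\bS^1$ (your Case~1), in which case connectedness forces $P=D^{+}$ or $P=D^{-}$ and the coordinate flip immediately gives $P^{\mathrm{sym}}=\varGamma_\xi$ with \emph{no} propagation step at all; or some fibre of $P$ already contains two points (Case~2), and you propagate. This is a genuinely different decomposition. Its advantage is that Case~1 is settled by pure topology. The paper, by contrast, always needs the propagation step. Note, incidentally, that your Case~1 conclusion $P=D^{+}\Rightarrow P^{\mathrm{sym}}=\varGamma_\xi$ encodes exactly the paper's symmetry observation $(a,a^{+})\in P\Rightarrow (a^{+},a)\in P^{\mathrm{sym}}$, i.e.\ $a^{+}$ is counterclockwise nice.

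Where the two proofs meet is the propagation step. The paper's one-line ``by definition of $\varOmega$'' hides precisely the conditional Borel--Cantelli argument you make explicit: among the infinitely many $n$ for which the \emph{pair} $(Z_n^x,Z_n^y)$ lies in a prescribed product neighbourhood (an $\mathscr F_n$-event), infinitely many must be followed by a prescribed symbol or word. Your account of this is more honest than the paper's. One point to be careful about: the countable family over which you enlarge $\varOmega$ must be fixed \emph{before} seeing $\omega$, whereas $p_0$ depends on $\omega$; so the Borel--Cantelli statement should be set up for balls $B(q,1/m)$ with $q$ ranging over a fixed countable dense set, and a pigeonhole over that base then transfers the conclusion to the actual $p_0(\omega)$. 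Your phrase ``over a countable family of data'' suggests you have this in mind, but it deserves a sentence.
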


\begin{proof}
We first prove that there exist both clockwise nice and
counterclockwise nice points.
Indeed, by definition of $\varOmega$, any $a \in\bS^1$ is either
clockwise nice, counterclockwise nice, or nice. By contradiction,
suppose that all points $a\in\bS^1$ are only clockwise nice (the case
that all points are only counterclockwise nice is analogous). Then, in
particular, a given point $a$ and the point $c$ obtained from a
counterclockwise rotation of $a$ would both be only clockwise nice. But
$c$ being clockwise nice would imply that $a$ is counterclockwise nice,
contradiction.

Thus, there exist points of either type which are arbitrarily close to
each other. Hence,
there exists at least one point in $\bS^1$ which is nice.

By definition of $\varOmega$ it follows that nice points are mapped to
nice points by all maps,
so by forward minimality it follows
that every point in $\bS^1$ is nice.
\end{proof}

Let us now prove that the distance between any two points $a,b\in\bS^1$
with\break $\rho(a,b)=\xi(\omega)$ is preserved by all the maps in $F$.
Arguing by contradiction, suppose that $\xi(\omega) \notin L$, and
consider an interval $[a,b]$ with $\rho(a,b)=\xi(\omega)$
such that for some $j\in\{1,\ldots,N\}$ we have
\[
\rho(a,b)\neq \rho \bigl(f_j(a), f_j(b)\bigr).
\]
By continuity of $f_j$, there exist open intervals $O_a$ and $O_b$
containing $a$ and $b$, respectively and some positive number
$\varepsilon$ such that for any $a' \in O_a$ and any $b' \in O_b$ we have
\[
\big|\rho\bigl(a',b'\bigr) - \rho \bigl(f_j
\bigl(a'\bigr),f_j\bigl(b'\bigr)\bigr)\big| >
\varepsilon.
\]

By choice of $\varOmega$ and the fact that $a$ is nice, there exist
arbitrary large integers $n$ such that either $Z_n(x,\omega) \in O_a$,
and $Z_n(y,\omega) \in O_b$ simultaneously or $Z_n(y,\omega) \in O_a$,
and $Z_n(x,\omega) \in O_b$ simultaneously and
$I_{n+1}(\omega)=\omega_{n+1}=j$. Hence
\[
\big|\rho\bigl(Z_n(x,\omega),Z_n(y,\omega)\bigr) -\rho
\bigl(Z_{n+1}(x,\omega),Z_{n+1}(y,\omega)\bigr)\big| > \varepsilon,
\]
contradicting the assumption that $\omega\in\varOmega$.

This completes the proof that for any $x,y \in\mathbb{S}^1$, $\rho
(Z_n^x,Z_n^y)$ converges almost surely to an $L$-valued random variable.
\end{proof}

The following result about uniqueness of invariant probability measures
is not new and was, to the best of our knowledge, first proved in \cite
{M17}. A simple direct proof based on equicontinuity was recently
presented in \cite{SZ17}. Note that equicontinuity of $\{T^n h \}$,
where $T^n h(x)=\int h(Z_n(x,\omega))\, d P(\omega)$ for any Lipschitz
continuous function $h\colon\mathbb{S}^1\to\bR$,
follows trivially from Proposition \ref{avcontr}. Indeed, if $\rho$ is
the metric of Proposition \ref{avcontr}, then $\int\rho(Z_n(x,\omega
),Z_n(y,\omega)) \,dP(\omega) \leq\rho(x,y)$.
For completeness we will show that uniqueness of invariant probability
measures is also a very simple consequence of Theorem \ref{teo:lval2}.

\begin{corollary}\label{cor:lem:1kinv}
Any IFS $(F,\bp)$ with probabilities of homeomorphisms on $\mathbb
{S}^1$ which is forward and backward minimal has a unique invariant
probability measure $\mu_+$.
\end{corollary}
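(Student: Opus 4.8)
The plan is to prove existence, which is immediate from Remark~\ref{rem:existamea} (weak Feller property), and to spend the real work on uniqueness. The first step is to reduce to the situation in which $(F,\bp)$ is non-expansive on average with respect to the \emph{standard} metric $d$. Since $F$ is backward minimal, Proposition~\ref{avcontr} produces the conjugated system $G=\{\varPhi_{-}\circ f_j\circ\varPhi_{-}^{-1}\}_{j}$ with probabilities $\bp$, which is non-expansive on average with respect to $d$, is again forward and backward minimal (both properties being invariant under conjugation by a homeomorphism), and for which $(\varPhi_{-})_{\ast}$ is a bijection between invariant probability measures of $(F,\bp)$ and of $(G,\bp)$; so it suffices to treat $(G,\bp)$, and I henceforth assume $(F,\bp)$ itself is non-expansive on average with respect to $d$ and put $L:=L(F,d)$. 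By Lemma~\ref{lem:L} either $L=[0,1/2]$ or $L=\{0,1/k,\dots,\lfloor k/2\rfloor/k\}$ for some integer $k\ge1$. In the first case Lemma~\ref{lem:1kinv-b} forces every invariant measure $\mu$ to be $s$-invariant for all $s\in[0,1/2]$, hence (composing rotations, $R_{s}=R_{s-1/2}\circ R_{1/2}$) for all $s\in[0,1]$, so $\mu$ is rotation invariant and equals normalized Lebesgue measure; uniqueness is done.

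It remains to handle $L=\{0,1/k,\dots,\lfloor k/2\rfloor/k\}$, the substantial case (for $k=1$ it degenerates to $L=\{0\}$ and what follows is trivial). Let $\pi(x)=kx\bmod 1$ and let $\widetilde F=\{\tilde f_j\}_j$ be the factor IFS of Remark~\ref{rem:00}, so that $\pi\circ f_j=\tilde f_j\circ\pi$ and $L(\widetilde F,d)=\{0\}$. The plan is to show that $\widetilde F$ is an IFS of circle \emph{homeomorphisms} which is forward minimal and non-expansive on average with respect to $d$, and then to invoke Theorem~\ref{teo:lval2}. Forward minimality passes along $\pi$ by lifting open sets and base points. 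That each $\tilde f_j$ is a homeomorphism follows because $1/k\in L(F,d)$ forces the continuous map $x\mapsto f_j(x+1/k)-f_j(x)$ on $\mathbb{S}^1$ to take values in the finite set $\{t\in\mathbb{S}^1:\ d(t,0)=1/k\}$, hence to be constant, so that each $f_j$ normalizes $\langle R_{1/k}\rangle$ and descends to a homeomorphism of $\mathbb{S}^1/\langle R_{1/k}\rangle\cong\mathbb{S}^1$, which is exactly $\tilde f_j$. Non-expansiveness of $\widetilde F$ with respect to $d$ is the one point needing a short computation: I would use the identity $d(\pi(a),\pi(b))=k\,d_{1/k}(a,b)$, where $d_{1/k}$ is the natural metric on $\mathbb{R}/((1/k)\mathbb{Z})$, the inequality $d_{1/k}\le d$, and the observation that any $\tilde x,\tilde y\in\mathbb{S}^1$ have $\pi$-preimages $x,y$ with $d(x,y)=d_{1/k}(x,y)=\tfrac1k d(\tilde x,\tilde y)$, and then push the non-expansiveness of $F$ through $\pi$ using $\pi\circ f_j=\tilde f_j\circ\pi$. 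Granting these, Theorem~\ref{teo:lval2} applied to $\widetilde F$ (forward minimal, non-expansive on average, $L=\{0\}$) yields that the $d$-distance between the two trajectories of $\widetilde F$ started at any $x,y$ tends to $0$ almost surely.

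From the synchronization of $\widetilde F$ uniqueness is soft. If $\tilde\mu_1,\tilde\mu_2$ are invariant for $\widetilde F$, couple them via independent initial points $X_i\sim\tilde\mu_i$, independent of the driving sequence; by Fubini the almost-sure synchronization holds for these random initial points, and since $\int h\,d\tilde\mu_i=\bE[h(Z_n^{X_i})]$ for every $n$ and every continuous $h\colon\mathbb{S}^1\to\bR$ (stationarity), bounded convergence gives $\int h\,d\tilde\mu_1=\int h\,d\tilde\mu_2$, whence $\tilde\mu_1=\tilde\mu_2$; so $\widetilde F$ has a unique invariant measure $\tilde\mu$. Finally, any invariant measure $\mu$ for $F$ is $1/k$-invariant by Lemma~\ref{lem:1kinv-b} (as $1/k\in L$), and $\pi_{\ast}\mu$ is invariant for $\widetilde F$ (immediate from $\pi\circ f_j=\tilde f_j\circ\pi$), so $\pi_{\ast}\mu=\tilde\mu$; but a $1/k$-invariant measure is determined by its $\pi$-push-forward (recover $\mu$ on $[0,1/k)$ from $\mu(E)=\tfrac1k\,\pi_{\ast}\mu(\pi(E))$ and extend by $1/k$-invariance), so all invariant measures of $F$ coincide. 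Undoing the conjugation of the first step, $(F,\bp)$ has a unique invariant probability measure.

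I expect the main obstacle to be the claim used in the middle paragraph, namely that the factor $\widetilde F$ of a system which is non-expansive on average is itself non-expansive on average with respect to the standard metric $d$, together with the related structural fact that its maps are genuine circle homeomorphisms rather than merely continuous factors; the remaining ingredients --- the reduction via Proposition~\ref{avcontr}, the super-martingale and coupling argument, and the rotation-invariance bookkeeping --- are routine.
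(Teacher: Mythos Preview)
Your proof is correct and follows the same architecture as the paper's: reduce via Proposition~\ref{avcontr} to a conjugate system non-expansive on average for $d$, split according to the shape of $L$, dispose of $L=[0,1/2]$ by rotation invariance (Lemma~\ref{lem:1kinv-b}), and for finite $L$ pass to the factor system of Remark~\ref{rem:00} whose preserved-distance set is $\{0\}$, obtain synchronization there, and lift uniqueness back via $1/k$-invariance.

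The one substantive difference is precisely the point you flag as the main obstacle. You invoke Theorem~\ref{teo:lval2} a \emph{second} time, now on $\widetilde F$, and so must verify that the factor maps are genuine homeomorphisms, that $\widetilde F$ is forward minimal, and that it is non-expansive on average for $d$; your arguments for all three are correct, but they are the bulk of the work. The paper sidesteps this completely: it applies Theorem~\ref{teo:lval2} only once, to $G$, obtaining that $d(W_n^x,W_n^y)$ converges a.s.\ to an $L$-valued random variable. Since an $L$-valued limit is a multiple of $1/k$, the projected trajectories $W_n^x \bmod 1/k$ and $W_n^y \bmod 1/k$ (which, by the semiconjugacy $\pi\circ g_j=\tilde g_j\circ\pi$, are the $\widetilde G$-trajectories) synchronize automatically, with no hypothesis on $\widetilde G$ to check. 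The uniqueness argument for the factor then runs exactly as in your coupling paragraph (the paper phrases it as attractiveness of the stationary distribution, which is the same computation). So your detour through the structural properties of $\widetilde F$ is unnecessary: synchronization of the factor is a free consequence of the $L$-valued convergence upstairs.
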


\begin{proof}
Let $\mu_-$ be an invariant probability measure for $(F^{-1},\bp)$ and
define the metric $\rho$ by $\rho(x,y):=\min\{ \mu_{-}([x,y]), \mu
_{-}([y,x])\}$.
By Proposition~\ref{avcontr}, the IFS $G=\{g_j\}_j$ defined by
$g_j:=\varPhi_-\circ f_j\circ\varPhi_-^{-1}$, where $\varPhi_{-}(x)=\mu
_-([0,x])$, with probabilities $\bp$ is non-expansive on average with
respect to $d$ and we have $L:=L(G,d)=L(F,\rho)$.

By Theorem \ref{teo:lval2}, with $Z_n$ as in~\eqref{def:Zn} and
$W_n:=\varPhi_{-} \circ Z_n \circ\varPhi_{-}^{-1}$, we have that
$d(W_n^x,W_n^y)$ converges almost surely to an $L$-valued random
variable as $n\to\infty$, for any $x,y \in\mathbb{S}^1$.

We are now going to show that there is a unique invariant probability
measure $\nu_+$ for the IFS $(G,\bp)$. This will imply that $\mu_+:=
(\varPhi_{-}^{-1})_\ast\nu_+$
is the unique invariant probability measure for $(F,\bp)$.\vadjust{\eject}

Let us divide the proof into cases:

\smallskip\noindent\textbf{Case $L=\{0\}$.}
Consider first the (generic) case $L=\{0\}$. Thus, $d(W_n^x,W_n^y)
\rightarrow0$ as $n\to\infty$ a.s.\ for any $x,y\in\bS^1$.
Let $\nu_+$ be an invariant probability measure for $(F,\bp)$, that
is, a stationary distribution for $(W_n^x)_{n \geq0}$ (recall Remark
\ref{rem:existamea}).
For any $x,y \in\mathbb{S}^1$ and for any continuous $h \colon\mathbb
{S}^1\to\bR$,
by Lebesgue's dominated convergence theorem
\[
T^n h(x)- T^n h(y) =\int_\varSigma h
\bigl(W_n(x,\omega)\bigr)dP(\omega)-\int_\varSigma h
\bigl(W_n(y,\omega )\bigr)dP(\omega) \rightarrow0
\]
as $n \rightarrow\infty$, and thus
by invariance of $\nu_+$ we have
\[
\biggl\lvert T^n h(x)- \int h \,d \nu_+ \biggr\rvert = \biggl\lvert
T^n h(x)- \int T^n h \,d \nu_+ \biggr\rvert \leq\int\bigl
\lvert T^n h(x)- T^n h(y) \bigr\rvert\,d \nu_+(y)
\]
and by Lebesgue's dominated convergence theorem the latter tends to $0$
as $n \rightarrow\infty$. This implies that $\nu_+$ must be attractive
and thus unique (recall Remark \ref{rem:unistamea}).

\smallskip\noindent\textbf{Case $L=\{0,1/k,\ldots,\lfloor{k/2}\rfloor
/k\}$ for some $k\ge2$.}
By Lemma~\ref{lem:1kinv-b} all invariant probability measures for
$(G,\bp)$
are $1/k$-invariant. By contradiction, suppose that there are
two distinct invariant probability measures $\nu_+^1$ and $\nu_+^2$ for
$(G,\bp)$. Hence, if $X$ and $Y$ are two random variables with
distribution $\nu_+^1$ and $\nu_+^2$ respectively, independent of $\{
I_n\}$, then $W_n^X \mod1/k$, and $W_n^Y$ mod $1/k$ will also have
distinct distributions for any fixed $n \geq0$, by $1/k$--invariance
of $\nu_+^1$ and $\nu_+^2$.
The latter is however impossible since the IFS $\widetilde G=\{\tilde
g_j\}$ defined by $\tilde g_j(x)=k (g_j(x/k)\mod 1/k)$, $j=1, \ldots,
N$, satisfies $L(\widetilde G,d)=\{0\}$ (recall Remark \ref{rem:00})
and therefore the distribution of $W_n^X\mod 1/k$ converges to the
same limit as the limiting distribution of $W_n^Y\mod1/k$, as $n
\rightarrow\infty$. The invariant probability measure, $\nu_+$, is
therefore unique.

\smallskip\noindent\textbf{Case $L=[0,1/2]$.}
In this case, by Lemma~\ref{lem:L} all maps in $G$ are isometries (with
respect to $d$). By Lemma~\ref{lem:1kinv-b}, any invariant probability
measure is $s$-invariant for any $s\in[0,1/2]$, which implies that $\nu
_+$ must be the Lebesgue measure.
\end{proof}

By applying Breiman's ergodic theorem
for Feller chains with a unique stationary distribution starting at a
point (see, for example,~\cite{B60} or \cite{MT93}), we get the
following result.
Let $\delta_{x}$ denote the Dirac measure concentrated in the point
$x\in\bS^1$,
and let
\[
\mu_n^x(\omega) = \frac{1}{n} \sum
_{k=0}^{n-1} \delta_{Z_k(x,\omega)},
\]
denote the empirical distribution along the trajectory starting at $x
\in\mathbb{S}^1$ determined by $\omega\in\varSigma$ at time $n-1$.

\begin{corollary} \label{0218}
Let $(F,\bp)$ be an IFS with probabilities of homeomorphisms on $\mathbb
{S}^1$ which is forward and backward minimal and let $\mu_{+}$ denote
its unique invariant probability measure.
Then
$\mu_n^x(\omega)$ converges to $\mu_+$ (in the weak$\ast$ sense) $P$
a.s. for any $x \in \mathbb{S}^1$.
\end{corollary}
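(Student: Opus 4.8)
The plan is to obtain the statement as a direct application of Breiman's strong law of large numbers for Feller Markov chains possessing a unique stationary distribution. First I would collect the two ingredients this requires. By Remark~\ref{rem:existamea}, since all $f_j$ are continuous, the Markov chain $(Z_n^x)_{n\ge0}$ generated by the IFS with probabilities $(F,\bp)$ has the weak Feller property. By Corollary~\ref{cor:lem:1kinv}, forward \emph{and} backward minimality of $F$ guarantee that $(F,\bp)$ has a unique invariant probability measure $\mu_+$; equivalently, $(Z_n^x)_{n\ge0}$ has a unique stationary distribution. Note that backward minimality is used precisely to secure this uniqueness, since forward minimality alone does not suffice.

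With these hypotheses in place, Breiman's theorem (see~\cite{B60} or~\cite{MT93}) applies and yields, for every bounded continuous $h\colon\mathbb{S}^1\to\bR$ and every starting point $x\in\mathbb{S}^1$,
\[
\int h\,d\mu_n^x(\omega)=\frac1n\sum_{k=0}^{n-1} h\bigl(Z_k(x,\omega)\bigr)\longrightarrow \int h\,d\mu_+ \qquad\text{for } P\text{-a.a.\ }\omega\in\varSigma .
\]
It then remains to upgrade this ``for each $h$, $P$-a.s.'' statement into the ``$P$-a.s., for all $h$'' statement that constitutes weak$\ast$ convergence of $\mu_n^x(\omega)$ to $\mu_+$. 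This is routine: since $\mathbb{S}^1$ is a compact metric space, $C(\mathbb{S}^1)$ is separable, so I would fix a countable dense subset $\{h_m\}_{m\ge1}\subset C(\mathbb{S}^1)$, intersect over $m$ the full-measure sets obtained above to produce a single set $\varOmega_0\subset\varSigma$ with $P(\varOmega_0)=1$ along which $\int h_m\,d\mu_n^x(\omega)\to\int h_m\,d\mu_+$ for every $m$, and then approximate an arbitrary $h\in C(\mathbb{S}^1)$ uniformly by the $h_m$, using that all $\mu_n^x(\omega)$ and $\mu_+$ are probability measures, to deduce $\int h\,d\mu_n^x(\omega)\to\int h\,d\mu_+$ for every $\omega\in\varOmega_0$. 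Hence $\mu_n^x(\omega)\to\mu_+$ in the weak$\ast$ topology for $P$-a.a.\ $\omega$, for any $x\in\mathbb{S}^1$.

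I do not expect any genuine obstacle here beyond quoting Breiman's theorem in the stated form: the Feller property is automatic, uniqueness of the stationary distribution is already in hand from Corollary~\ref{cor:lem:1kinv}, and the separability reduction in the final step is entirely standard. Should one prefer not to invoke Breiman's theorem as a black box, an alternative would be to run a martingale/coupling argument in the spirit of the proof of Theorem~\ref{teo:lval2} together with the non-expansiveness on average from Proposition~\ref{avcontr}; but this seems an unnecessary detour given that the cited result is exactly tailored to the present situation.
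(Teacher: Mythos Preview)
Your proposal is correct and follows exactly the approach the paper uses: the corollary is stated there as an immediate consequence of Breiman's ergodic theorem for Feller chains with a unique stationary distribution, citing~\cite{B60} and~\cite{MT93}, with no further argument given. Your write-up is in fact more detailed than the paper's, which does not spell out the separability step; that step is standard and your handling of it is fine.
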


\begin{remark}
Corollary \ref{0218} slightly generalizes \cite[Proposition 16]{SZ16}
where a direct proof is given and the additional hypotheses that all
maps in the IFS preserve orientation and that one map is minimal are assumed.
\end{remark}

Let $\stackrel{\text{d}}{\rightarrow}$ denote convergence in distribution.
We are now ready to state our first result about invariant
measures/stationary distributions for the IFS with probabilities
generated by the inverse maps.

\begin{proposition}\label{lval3}
Let $(F,\bp)$ be an IFS with probabilities of homeomorphisms on
$\mathbb{S}^1$ which is forward minimal and non-expansive on average
with respect to $d$.
Assume that some map $f_j$ is not an isometry (with respect to $d$). Then
$L(F,d)=\{0,1/k,\ldots,\break\lfloor{k/2} \rfloor/k \}$ for some $k \geq1$
and for any $1/k$-invariant
nonatomic and fully supported random variable $X$ on $ \mathbb{S}^1$,
independent of $(I_n)_{n \geq0}$ we have
\[
\widehat{Z}^{-}_n(X,\omega) \stackrel{\text{d}} {
\rightarrow} {\bf \widehat Z}^-(\omega)
\]
as $n \rightarrow\infty$ for $P$ a.a. $\omega\in\varSigma$, where ${\bf
\widehat Z}^-(\omega)$ is a random variable with distribution
\[
\mu_\omega^- =\frac{1}{k} \sum_{i=0}^{k-1}
\delta_{\frac{1}{k}(i+\widehat Z^-(\omega))}
\]
for some random variable $\widehat{Z}^{-}\colon\varSigma\rightarrow
\mathbb{S}^1$ and $\mu_{ \omega}^-=(f_{\omega_1}^{-1})_\ast\mu_{\sigma
(\omega)}^-$ for $P$ a.a. $\omega\in\varSigma$.

Thus, the measure $\mu_-$ given by
\[
\mu_- := \int\mu_\omega^-\, dP(\omega)
\]
is the unique invariant probability measure for $(F^{-1},\bp)$.
\end{proposition}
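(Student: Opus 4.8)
The plan is to first apply Lemma~\ref{lem:L} to the metric $d$: since $f_j$ is not an isometry, $L(F,d)$ cannot be infinite, so it must be of the finite form $\{0,1/k,\dots,\lfloor k/2\rfloor/k\}$ for some $k\ge1$. The heart of the argument is then the backward convergence statement, and the natural approach is to realize $\widehat Z_n^-(x,\omega)=(f_{\omega_1}^{-1}\circ\cdots\circ f_{\omega_n}^{-1})(x)$ through its relation to forward iterates: by the identity recorded after \eqref{def:hatZn-}, $\widehat Z_n^-(x,\omega)=(f_{\omega_n}\circ\cdots\circ f_{\omega_1})^{-1}(x)$, so studying $\widehat Z_n^-$ amounts to studying the inverse branches of $Z_n$. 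I would fix a typical $\omega$ and analyze the nested structure of the intervals $(f_{\omega_n}\circ\cdots\circ f_{\omega_1})^{-1}(J)$ for small intervals $J$. Equivalently, I would pass to the conjugate non-expansive-on-average system and use Theorem~\ref{teo:lval2} applied to $(F^{-1},\bp)$ — or rather to $F$ itself — noting that $L(F,d)=L(F^{-1},d)$ is finite, so $d(Z_n^x,Z_n^y)$ converges a.s.\ to an $L$-valued limit; this gives contraction "modulo $1/k$" of the forward maps, hence the inverse maps $\widehat Z_n^-$, applied to the images of $X$, collapse (in distribution, for a.e.\ $\omega$) onto an orbit of the rotation group $\{R_{i/k}\}$.

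More concretely, the key steps in order are: (1) invoke Lemma~\ref{lem:L} to pin down the form of $L(F,d)$; (2) reduce mod $1/k$ via the semiconjugacy $\pi(x)=kx\bmod 1$ of Remark~\ref{rem:00}, so that the pushed-down system $\widetilde F$ has $L(\widetilde F,d)=\{0\}$, and apply the $L=\{0\}$ analysis there; (3) for $P$-a.e.\ $\omega$, show $\pi(\widehat Z_n^-(X,\omega))$ converges in distribution to a Dirac mass $\delta_{\widehat Z^-(\omega)}$ using the synchronization (contraction) of the conjugated non-expansive-on-average system together with the fact that $\pi_*X$ is Lebesgue-like (nonatomic, full support) on the quotient circle and that $X$ is $1/k$-invariant; (4) lift back: since $X$ is $1/k$-invariant, the distribution of $\widehat Z_n^-(X,\omega)$ is itself $1/k$-invariant for each $n$, and its weak$*$ limit, being $1/k$-invariant and having $\pi$-image a Dirac mass at $\widehat Z^-(\omega)$, must be exactly $\mu_\omega^-=\frac1k\sum_{i=0}^{k-1}\delta_{\frac1k(i+\widehat Z^-(\omega))}$; (5) verify the cocycle/equivariance relation $\mu_\omega^-=(f_{\omega_1}^{-1})_*\mu_{\sigma(\omega)}^-$ directly from $\widehat Z_n^-(x,\omega)=f_{\omega_1}^{-1}(\widehat Z_{n-1}^-(x,\sigma\omega))$ and passing to the limit; (6) integrate over $\omega$ to get $\mu_-=\int\mu_\omega^-\,dP$, check $T_*^{F^{-1}}\mu_-=\mu_-$ from (5) and the fact that $\omega_1$ is independent of $\sigma\omega$ with law $\bp$, and finally conclude uniqueness from Corollary~\ref{cor:lem:1kinv} applied to $(F^{-1},\bp)$ (which is forward and backward minimal whenever $(F,\bp)$ is), or by a direct argument: any invariant measure for $(F^{-1},\bp)$ is $1/k$-invariant by Lemma~\ref{lem:1kinv-b}, and two such would give distinct limits for $\pi(\widehat Z_n^-(X,\cdot))$, contradicting step (3).

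The main obstacle I expect is step (3)–(4): making precise the passage from "the forward cocycle contracts mod $1/k$" to "the random pushforward $\widehat Z_n^-(X,\cdot)$ converges in distribution, $\omega$-wise, to a finite sum of Diracs." This requires care because $\widehat Z_n^-$ is the reversed-order composition, so one cannot directly apply the super-martingale argument of Theorem~\ref{teo:lval2} to it; instead one must exploit that $\widehat Z_n^-(X,\omega)$ and $Z_n^-(X,\omega)$ have the same distribution for each fixed $n$ (by the i.i.d.\ assumption and exchangeability of $(I_1,\dots,I_n)$), transfer the a.s.\ convergence of $Z_n^-$ along *forward* trajectories into distributional convergence of $\widehat Z_n^-$, and then upgrade to $P$-a.e.-$\omega$ convergence by a martingale or Breiman-type argument — essentially the standard "the one-sided limit $\lim_n f_{\omega_1}^{-1}\circ\cdots\circ f_{\omega_n}^{-1}$ exists as a (degenerate, here $k$-periodic) random measure" phenomenon. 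The $1/k$-invariance of $X$ is what guarantees the limit is genuinely the uniform average of $k$ Diracs rather than a single one, and the nonatomic full-support hypothesis is what ensures the limit does not depend on finer properties of the law of $X$.
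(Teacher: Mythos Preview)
Your plan starts well and ends well: the reduction via Lemma~\ref{lem:L} to finite $L$, the cocycle identity $\widehat Z_n^-(x,\omega)=f_{\omega_1}^{-1}\bigl(\widehat Z_{n-1}^-(x,\sigma\omega)\bigr)$, and the integration to obtain $\mu_-$ all match the paper. The difficulty is exactly where you flag it, and your proposed resolution of that difficulty has a genuine gap.

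You write that you will ``transfer the a.s.\ convergence of $Z_n^-$ along forward trajectories'' and then upgrade. But $Z_n^-$ is the forward chain for $(F^{-1},\bp)$, and Theorem~\ref{teo:lval2} does \emph{not} apply to it: under the hypotheses of Proposition~\ref{lval3} we only know that $F$ is forward minimal and non-expansive on average, not $F^{-1}$. So there is no a.s.\ convergence statement for $d(Z_n^-(x,\cdot),Z_n^-(y,\cdot))$ to transfer. The same problem breaks your uniqueness argument: invoking Corollary~\ref{cor:lem:1kinv} for $(F^{-1},\bp)$ requires backward minimality of $F$, which is not assumed here; and your alternative via Lemma~\ref{lem:1kinv-b} applied to $(F^{-1},\bp)$ again needs forward minimality of $F^{-1}$.

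The paper avoids all of this by exploiting directly the identity you yourself quote, $\widehat Z_n^-(\cdot,\omega)=\bigl(Z_n(\cdot,\omega)\bigr)^{-1}$, at the level of distribution functions rather than trajectories. For a $1/k$-invariant nonatomic fully supported law $m$ and any $x\in\bS^1$,
\[
m\bigl(\{y:\widehat Z_n^-(y,\omega)\in[0,x]\}\bigr)
=m\bigl(\{y:y\in Z_n([0,x],\omega)\}\bigr)
=m\bigl(Z_n([0,x],\omega)\bigr).
\]
Theorem~\ref{teo:lval2}, applied to $F$, gives that $\lvert Z_n([0,x],\omega)\rvert$ converges a.s.\ to an element of $\{0,1/k,\ldots,1\}$; the $1/k$-invariance of $m$ then forces $m(I)=i/k$ whenever $\lvert I\rvert=i/k$, so the CDF converges to the step function with $k$ equal jumps. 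This yields the $\omega$-wise distributional limit $\mu_\omega^-$ in one stroke, with no appeal to $F^{-1}$-dynamics and no need for the mod-$1/k$ reduction of Remark~\ref{rem:00}. The point $\widehat Z^-(\omega)$ is identified concretely as $k\sup\{y:\lvert Z_n([0,y],\omega)\rvert\to0\}$. Uniqueness then follows because $\mu_\omega^-$ visibly does not depend on the law of $X$, so $\mu_-=\int\mu_\omega^-\,dP$ is intrinsic.

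In short: keep your use of the inversion identity, but use it to compute the CDF of $\widehat Z_n^-(X,\omega)$ directly in terms of $Z_n$-images of intervals, and drop the detour through $Z_n^-$.
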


\begin{proof}
Let $L=L(F,d)$. By Lemma~\ref{lem:L} together with our hypotheses, we
have $L=\{0,1/k,\ldots,\lfloor{k/2}\rfloor/k\}$ for some $k\ge1$.
Hence, if $d(x,y)=s \in L$, then
\[
d\bigl( f_j(x),f_j(y)\bigr)=s
\]
for all $j=1,\ldots,N$ and thus
\[
d\bigl( Z_n(x,\omega),Z_n(y,\omega)\bigr)=s
\]
for any $\omega\in\varSigma$ and $n \geq0$.

Let us denote by $Z_n$ and $\widehat Z_n^-$ the sequences defined
in~\eqref{def:Zn} and~\eqref{def:hatZn-}, respectively. By Theorem~\ref
{teo:lval2} we have that $d(Z_n^x,Z_n^y)$ converges almost surely to an
$L$-valued random variable as $n\to\infty$.

Given $\omega\in\varSigma$, let
\[
\widehat{Z}^{-}(\omega) := k \sup\bigl\{y\colon \big|Z_n
\bigl([0,y],\omega\bigr)\big| \rightarrow0, \text{ as } n \rightarrow\infty\bigr\},
\]
where $\lvert\cdot\rvert$ denotes the length of an interval and where
we use the notation
\[
Z_n\bigl([0,y],\omega\bigr) := \bigl\{Z_n(z,\omega)
\colon z\in[0,y]\bigr\}.
\]
Note that
$y\mapsto|Z_n([0,y],\omega)|$ is an increasing function, for each
fixed $n$ and $\omega$. Further, $|Z_n([0,y],\omega)|$ converges to
an element in $\{0,1/k,\ldots,1\}$
as $n \rightarrow\infty$, for any $y \in\mathbb{S}^1$ for $P$ a.a.
$\omega\in\varSigma$. Indeed, this follows from the fact that
$d(Z_n^x,Z_n^y)$\vadjust{\eject} converges to an element of $L$ and the fact that
$x\mapsto Z_n^x$ is a random homeomorphism. So $\widehat{Z}^{-}\colon
\varSigma\to\bS^1$ is a well-defined random variable.

Let $m$ be an arbitrary $1/k$-invariant nonatomic probability measure
fully supported on $\mathbb{S}^1$.
Note that if $I$ is an interval of length $i/k$, then $m(I)=i/k$ for
any $0 \leq i \leq k$.
If $x \notin\{\widehat{Z}^{-}(\omega)/k,
(\widehat{Z}^{-}(\omega)+1)/k,\ldots,(\widehat{Z}^{-}(\omega)+(k-1))/k\}
$, then
\[
\begin{split} m& \bigl( \bigl\{ y \in\mathbb{S}^1\colon
\widehat Z ^-_n(y,\omega) \leq x \bigr\} \bigr) = m \bigl( \bigl\{ y
\in\mathbb{S}^1\colon \widehat Z ^-_n(y,\omega) \in[0,x]
\bigr\} \bigr)
\\
&= m \bigl( \bigl\{ y \in\mathbb{S}^1\colon Z_n\bigl(
\widehat Z ^-_n(y,\omega),\omega\bigr) \in Z_n\bigl([0,x],
\omega\bigr) \bigr\} \bigr)
\\
&= m \bigl( \bigl\{ y \in\mathbb{S}^1\colon y \in{Z_n}
\bigl([0,x],\omega\bigr) \bigr\} \bigr)
\\
&\rightarrow %
\begin{cases}
0 & \text{ if }x < \displaystyle\frac{\widehat{Z}^{-}(\omega)}{k}, \\
\displaystyle\frac{i}{k} & \text{ if }\displaystyle\frac{\widehat
{Z}^{-}(\omega)+(i-1)}{k} < x <
\displaystyle\frac{\widehat{Z}^{-}(\omega)+i}{k}, \,\,1 \leq i \leq
k-1, \\
1 & \text{ if }x> \displaystyle\frac{\widehat{Z}^{-}(\omega)+(k-1)}{k}
\end{cases} %
\end{split} %
\]
as $n \rightarrow\infty$ for $P$ a.a. $\omega\in\varSigma$.
Thus, if $X$ is an $m$-distributed random variable on $\mathbb{S}^1$,
independent of $(I_n)_{n \geq0}$ and ${\bf\widehat Z}^-(\omega)$
has distribution
\[
\mu_\omega^- =\frac{1}{k} \sum_{i=0}^{k-1}
\delta_{{(i+\widehat Z^-(\omega))}/{k}} \quad\text{ for } P \text{ a.a. } \omega\in\varSigma
\]
then $m(\widehat{Z}^{-}_n(X,\omega) \leq x) \rightarrow m( {\bf
\widehat Z}^-(\omega) \leq x)$ as $n \rightarrow\infty$ if $x$ is a
continuity point of the cumulative distribution function of ${\bf
\widehat Z}^-(\omega)$ (for $P$ a.a. $\omega\in\varSigma$). Thus,
$\widehat{Z}^{-}_n(X,\omega)$ converges in distribution to ${\bf
\widehat Z}^-(\omega)$ as $n \rightarrow\infty$ for $P$ a.a. $\omega
\in\varSigma$. By taking limits in the equality $\widehat
{Z}_n^{-}(X,\omega)= f_{\omega_1}^{-1}(\widehat{Z}_{n-1}^{-}(X,\sigma
(\omega)))$,
it therefore follows that
\[
{\bf\widehat Z}^-( \omega) =f_{\omega_1}^{-1} \bigl({\bf\widehat
Z}^-(\sigma\omega)\bigr),
\]
for $P$ a.a. $\omega$.
Thus if $\mu_\omega^{-}$ denotes the distribution of ${\bf\widehat
Z}^-( \omega)$, then
\[
\mu_\omega^{-} =\bigl(f_{\omega_1}^{-1}
\bigr)_\ast\mu_{\sigma(\omega)}^-.
\]
for $P$ a.a. $\omega$.

By integrating both sides of this equality with respect to $P$ (recall
that $P$ is a Bernoulli measure determined by a probability vector $\bp
=(p_1,\ldots,p_N)$) we thus obtain that
\begin{eqnarray*}
\mu_- &:=& \int\mu_\omega^-\, dP(\omega)= \sum
_{j=1}^N \int_{\omega: \omega
_1=j}
\bigl(f_{\omega_1}^{-1}\bigr)_\ast\mu_{\sigma(\omega)}^-\,
dP(\omega)
\\
&=& \sum_{j=1}^N \int
_{\omega: \omega_1=j} \bigl(f_{j}^{-1}
\bigr)_\ast\mu_{\sigma
(\omega)}^-\, dP(\omega)
\\
&=& \sum_{j=1}^N \int
_{\omega: \omega_1=j} \bigl(f_{j}^{-1}
\bigr)_\ast \mu_{-} \, dP(\omega)
\\
&=& \sum_{j=1}^N p_j
\bigl(f_{j}^{-1}\bigr)_\ast\mu_{-}\,
\end{eqnarray*}
and
$\mu_-$ is therefore invariant for $(F^{-1},\bp)$.
Since by construction $\mu_\omega$ is independent of $X$, it follows
that $\mu_-$ is indeed uniquely invariant.
\end{proof}

Using Propositions~\ref{avcontr} and~\ref{lval3} we get the following corollary.
\begin{corollary} \label{Corr4}
Let $(F,\bp)$ be an IFS with probabilities of homeomorphisms on
$\mathbb{S}^1$ which is forward and backward minimal. Assume
that not all maps in $F$ are conjugate (with a common
conjugation map) to an isometry (with respect to $d$).
Let $\mu_-$ be an invariant probability measure for $(F^{-1},\bp)$, and
let $k$ be the largest integer such that
$\mu_-$ is $1/k$-invariant.
Conclusion: if $X$ is a $\mu_{-}$-distributed random variable,
independent of $(I_n)_{n \geq0}$, then
\begin{equation}
\label{FurConv} \widehat{Z}^{-}_n(X,\omega) \stackrel{
\text{d}} {\rightarrow} {\bf\widehat Z}^-(\omega),
\end{equation}
as $n \rightarrow\infty$ for $P$ a.a. $\omega\in\varSigma$,
where ${\bf\widehat Z}^-(\omega)$ is a random variable with distribution
$\mu_\omega$, uniformly distributed on $k$ distinct points, and
satisfying $\mu_{ \omega}^-=(f_{\omega_1}^{-1})_\ast\mu_{\sigma(\omega
)}^-$ for $P$ a.a. $\omega\in\varSigma$.
It therefore follows that $\mu_-$ is unique and given by $\mu_-= \int
\mu_\omega^-\, dP(\omega)$.
\end{corollary}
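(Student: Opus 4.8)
The plan is to reduce the statement to Proposition~\ref{lval3} by passing to the topologically conjugate system furnished by $\mu_-$, as in the proofs of Corollary~\ref{cor:lem:1kinv} and of the corollary following Theorem~\ref{teo:lval2}. I would first apply Proposition~\ref{avcontr} to $(F,\bp)$ (which is backward minimal) with the given invariant measure $\mu_-$ of $(F^{-1},\bp)$: this produces the metric $\rho(x,y)=\min\{\mu_-([x,y]),\mu_-([y,x])\}$, the homeomorphism $\varPhi_-(x)=\mu_-([0,x])$, and the conjugate IFS $G=\{g_j\}_{j=1}^N$ with $g_j=\varPhi_-\circ f_j\circ\varPhi_-^{-1}$, which is non-expansive on average with respect to $d$ and satisfies $L(G,d)=L(F,\rho)$. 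For the bookkeeping below I record that $(\varPhi_-)_\ast\mu_-=\mu_{\mathrm{Leb}}$ (equivalently $(\varPhi_-^{-1})_\ast\mu_{\mathrm{Leb}}=\mu_-$), and that $\mu_-$ is nonatomic and fully supported by Lemma~\ref{lem:exiinvmea} applied to $(F^{-1},\bp)$.

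Next I would check the hypotheses of Proposition~\ref{lval3} for $(G,\bp)$: it is non-expansive on average with respect to $d$; it is forward minimal, since $F$ is forward minimal and forward minimality is preserved by the conjugacy $\varPhi_-$; and not every $g_j$ is a $d$-isometry, for otherwise every $f_j=\varPhi_-^{-1}\circ g_j\circ\varPhi_-$ would be conjugate, through the common conjugation map $\varPhi_-$, to a $d$-isometry, contradicting the hypothesis. Hence Proposition~\ref{lval3} applies to $(G,\bp)$ and yields an integer $k'\ge1$ with $L(G,d)=\{0,1/k',\dots,\lfloor k'/2\rfloor/k'\}$ such that, for every $1/k'$-invariant, nonatomic, fully supported $\bS^1$-valued random variable $Y$ independent of $(I_n)_{n\ge0}$, the conjugated trajectory $\widehat{Z}^{G,-}_n(\cdot,\omega):=\varPhi_-\circ\widehat{Z}^{-}_n(\cdot,\omega)\circ\varPhi_-^{-1}$ satisfies $\widehat{Z}^{G,-}_n(Y,\omega)\stackrel{\text{d}}{\rightarrow}\mathbf{\widehat Z}^{G,-}(\omega)$ for $P$-a.a.\ $\omega$, where $\mathbf{\widehat Z}^{G,-}(\omega)$ has a distribution $\nu_\omega$ that is uniform on $k'$ distinct points and satisfies $\nu_\omega=(g_{\omega_1}^{-1})_\ast\nu_{\sigma(\omega)}$, and $\nu:=\int\nu_\omega\,dP(\omega)$ is the unique invariant measure of $(G^{-1},\bp)$.

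It then remains to transport this back to $F$. Take $Y:=\varPhi_-(X)$; by the above it is Lebesgue distributed, hence nonatomic, fully supported, $s$-invariant for every $s$ (in particular $1/k'$-invariant) and independent of $(I_n)_{n\ge0}$, so it is an admissible input. The identity $g_{\omega_i}^{-1}=\varPhi_-\circ f_{\omega_i}^{-1}\circ\varPhi_-^{-1}$ gives $\widehat{Z}^{G,-}_n(Y,\omega)=\varPhi_-\bigl(\widehat{Z}^{-}_n(X,\omega)\bigr)$, so composing the convergence above with the continuous map $\varPhi_-^{-1}$ (continuous mapping theorem) yields, for $P$-a.a.\ $\omega$, $\widehat{Z}^{-}_n(X,\omega)\stackrel{\text{d}}{\rightarrow}\mathbf{\widehat Z}^{-}(\omega):=\varPhi_-^{-1}(\mathbf{\widehat Z}^{G,-}(\omega))$, whose law $\mu_\omega^-:=(\varPhi_-^{-1})_\ast\nu_\omega$ is uniform on $k'$ distinct points and, since $\varPhi_-^{-1}\circ g_{\omega_1}^{-1}=f_{\omega_1}^{-1}\circ\varPhi_-^{-1}$, satisfies $\mu_\omega^-=(f_{\omega_1}^{-1})_\ast\mu_{\sigma(\omega)}^-$. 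Since $(\varPhi_-)_\ast$ is a bijection between the invariant measures of $(F^{-1},\bp)$ and those of $(G^{-1},\bp)$ and $(\varPhi_-)_\ast\mu_-=\mu_{\mathrm{Leb}}$ is invariant for $(G^{-1},\bp)$, it equals the unique such measure $\nu$; hence $\mu_-$ is the unique invariant measure of $(F^{-1},\bp)$ and $\mu_-=(\varPhi_-^{-1})_\ast\nu=\int\mu_\omega^-\,dP(\omega)$. Finally one checks, via Lemma~\ref{lem:1kinv-b} and the forward and backward minimality (as in the corollary following Theorem~\ref{teo:lval2}), that $k'$ equals the largest integer $k$ for which $\mu_-$ is $1/k$-invariant.

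The hard part is not analytic — Propositions~\ref{avcontr} and~\ref{lval3} do that work — but organizational: one must keep the metric $\rho$, the conjugation $\varPhi_-$ and every push-forward identity paired correctly, the key one being $(\varPhi_-)_\ast\mu_-=\mu_{\mathrm{Leb}}$, which is exactly what makes the Lebesgue-distributed $Y=\varPhi_-(X)$ an admissible input for Proposition~\ref{lval3}. The one genuinely delicate point is the last matching of $k'$ with the intrinsic integer $k$ of the statement, equivalently the assertion that $\mu_-$ is precisely $1/k'$-invariant.
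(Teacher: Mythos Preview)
Your proposal is correct and follows essentially the same route as the paper: apply Proposition~\ref{avcontr} to obtain the conjugate system $G$, verify the hypotheses of Proposition~\ref{lval3} for $G$, feed in the Lebesgue-distributed $Y=\varPhi_-(X)$, and transport the conclusion back through $\varPhi_-^{-1}$. Your closing remark about matching $k'$ with the $k$ of the statement is a detail the paper's own proof leaves implicit.
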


\begin{remark}
Convergence in (\ref{FurConv}) also follows from Furstenbergs
martingale argument \cite{Fur:73}, but here we say more about the
limit: The limit is $1/k$-invariant and independent of $X$ (this
implies that $\mu_{-}$ is uniquely invariant) and the limiting fiber
measures $\mu_\omega$ are uniform and supported on sets of size $k$.
\end{remark}

\begin{proof}
Let $\mu_-$ be an invariant probability measure for $(F^{-1},\bp)$. By
Proposition~\ref{avcontr}, the IFS $G=\{g_j\}_j$ defined by $g_j:=\varPhi
_-\circ f_j\circ\varPhi_-^{-1}$, where $\varPhi_{-}(x)=\mu_-([0,x])$, with
probabilities $\bp$ satisfies the hypotheses of Proposition~\ref
{lval3}. Note that $F$ is forward minimal if, and only if, $G$ is.
Let $L(G,d)$ be the corresponding set of simultaneously preserved
distances. By Lemma~\ref{lem:L}, we have $L(G,d)=\{0,1/k,\ldots,\lfloor
k/2\rfloor/k\}$ for some $k\ge1$.

Let us denote by $( \widehat W_n^-)_{n \geq0}$ the sequence for the
IFS $G$
which is analogously defined as in~\eqref{def:hatZn-} for the IFS $F$.
Since $F$ and $G$ are conjugate by means of $\varPhi_{-}$,
it is easy to check that
\[
\widehat Z_n^{-} =\varPhi_{-}^{-1}
\circ\widehat W_n^- \circ\varPhi_{-}.
\]

Note that if $X$ is a $\mu_{-}$-distributed random variable,
independent of $(I_n)_{n \geq0}$, then $Y:=\varPhi_{-}(X)$ is distributed
according to the Lebesgue measure on $\mathbb{S}^1$. Hence, in
particular, it follows that $Y$ is a $1/k$-invariant, nonatomic, and
fully supported random variable on $ \mathbb{S}^1$.

By Proposition~\ref{lval3}, it therefore follows that
\[
\widehat{W}^{-}_n\bigl(\varPhi_{-}(X),\omega
\bigr)= \varPhi_{-}\bigl( \widehat{Z}^{-}_n (X,
\omega)\bigr) \stackrel{\text{d}} {\rightarrow} {\bf\widehat W}^-(\omega)
\]
as $n \rightarrow\infty$ for $P$ a.a. $\omega\in\varSigma$,
where ${\bf\widehat W}^-(\omega)$ is a random variable with distribution
\[
\nu_\omega^- =\frac{1}{k} \sum_{i=0}^{k-1}
\delta_{\frac{1}{k}(i+\widehat W^-(\omega))}
\]
for some random variable $\widehat{W}^{-}\colon\varSigma\rightarrow
\mathbb{S}^1$,
that is, we have
\[
\widehat{Z}^{-}_n(X,\omega) \stackrel{\text{d}} {
\rightarrow} \varPhi^{-1}_{-}\bigl({\bf\widehat W}^-(\omega)
\bigr)
\]
as $n \rightarrow\infty$ for $P$ a.a. $\omega\in\varSigma$.
Thus, if we define
${\bf\widehat Z}^-(\omega):= \varPhi^{-1}_{-}({\bf\widehat W}^-(\omega))$,
then this random variable has distibution
$\mu_\omega(\cdot)=\nu_\omega(\varPhi_{-}(\cdot))$.
Moreover, the measure $\mu_-$ given by
\[
\mu_- := \int\mu_\omega^-\, dP(\omega)
\]
is the unique invariant probability measure for $(F^{-1},\bp)$.
\end{proof}

\begin{remark}
If $(F,\bp)$ is both forward and backward minimal, then
by applying the above Corollary to $(F^{-1},\bp)$ we obtain an
alternative proof of uniqueness for $\mu_{+}$ under these assumptions.
\end{remark}

\section*{Acknowledgement}
We are grateful to the referees whose comments on an earlier draft of
the paper led to a substantially improved revised paper. KG also thanks
Anton Gorodetski and Victor Kleptsyn for inspiring discussions.


%
\end{document}